\numberwithin{equation}{section}
\theoremstyle{plain}
\newtheorem{thm}{Theorem}[section]
\crefname{thm}{Theorem}{Theorems}
\Crefname{thm}{Theorem}{Theorems}
\newtheorem{prop}[thm]{Proposition}
\crefname{prop}{Proposition}{Propositions}
\Crefname{prop}{Proposition}{Propositions}
\newtheorem{lem}[thm]{Lemma}
\crefname{lem}{Lemma}{Lemmas}
\Crefname{lem}{Lemma}{Lemmas}
\newtheorem{cor}[thm]{Corollary}
\crefname{cor}{Corollary}{Corollaries}
\Crefname{cor}{Corollary}{Corollaries}
\newtheorem {claim}[thm]{Claim}
\crefname{claim}{Claim}{Claims}
\Crefname{claim}{Claim}{Claims}
\crefname{property}{Property}{Properties}
\Crefname{property}{Property}{Properties}
\crefname{problem}{Problem}{Problems}
\Crefname{problem}{Problem}{Problems}
\theoremstyle{definition}
\crefname{defn}{Definition}{Definitions}
\Crefname{defn}{Definition}{Definitions}
\crefname{notation}{Notation}{Notations}
\Crefname{notation}{Notation}{Notations}
\crefname{convention}{Convention}{Conventions}
\Crefname{convention}{Convention}{Conventions}
\crefname{cond}{Condition}{Conditions}
\Crefname{cond}{Condition}{Conditions}
\crefname{assum}{Assumption}{Assumptions}
\Crefname{assum}{Assumption}{Assumptions}
\theoremstyle{remark}
\newtheorem{rem}[thm]{Remark}
\crefname{rem}{Remark}{Remarks}
\Crefname{rem}{Remark}{Remarks}
\newtheorem{ex}[thm]{Example}
\crefname{ex}{Example}{Examples}
\Crefname{ex}{Example}{Examples}
\crefname{section}{Section}{Sections}
\Crefname{section}{Section}{Sections}
\crefname{subsection}{Subsection}{Subsections}
\Crefname{subsection}{Subsection}{Subsections}
\crefname{figure}{Figure}{Figures}
\Crefname{figure}{Figure}{Figures}
\newcommand{\Z}{\mathbb{Z}}
\newcommand{\R}{\mathbb{R}}
\newcommand{\C}{\mathbb{C}}
\def\spinc{\text{Spin}^c}
\def\s{\mathfrak{s}}
\def\Con{\mathcal{C}}
\def\G{\mathcal{G}}
\def\su{\mathfrak{su}}
\def\cD{\mathcal{D}}
\def\F{\mathcal{F}}
\def\Index{\mathrm{Index \,}}
\def\im{\text{Im}}
\title[BF type invariant for $4$-manifolds with contact boundary]{A Bauer--Furuta type refinement of Kronheimer--Mrowka's invariant for 4-manifolds with contact boundary
}
\author{Nobuo Iida}
\address{Graduate School of Mathematical Sciences, the University of Tokyo, 3-8-1 Komaba, Meguro, Tokyo 153-8914, Japan}
\email{iida@ms.u-tokyo.ac.jp}
\date{}
\begin{document}

\maketitle

\begin{abstract}
Kronheimer and Mrowka constructed a variant of Seiberg--Witten invariants for a 4-manifold $X$ with contact  boundary in \cite{KM97}.
Using Furuta's finite dimensional approximation, we refine this invariant in the case $H^1(X, \partial X; \R)=0$.
\end{abstract}

\tableofcontents

\section{Introduction}
In \cite{KM97},  
Kronheimer and Mrowka constructed a variant of the Seiberg--Witten invariant for a compact, connected oriented 4-manifold $X$ on whose boundary a contact structure $\xi$ is given.
Their construction is based on the analysis of the Seiberg--Witten equation on a manifold $X^+$ obtained from $X$ by attaching cone-like ends with almost K\"ahler structure. 
In this paper, we construct a refinement of this invariant by using Furuta's finite dimensional approximation on $X^+$ in the case $H^1(X, \partial X; \R)=0$.
\par
The almost K\"ahler structure determines a canonical $\spinc$ sturecture on $X^+\setminus X$.
Following \cite{KM97}, we denote the set of isomorphism classes of extensions of this canonical $\spinc$ sturecture to $X^+$ by $\spinc(X, \xi)$.
\begin{thm}
Let $X$ be a smooth, compact, connected oriented 4-manifold equipped with a contact structure $\xi$ on its boundary.
Suppose in addition $H^1(X, \partial X; \R)=0$.
Then, for each $\s \in \spinc(X, \xi)$, we can construct a Bauer--Furuta-type stable cohomotopy Seiberg--Witten invariant 
\[
\Psi(X, \xi, \s)\in \pi^{st}_{d(\s)}(S^0) \quad(\text{up to sign})
\]
via finite dimensional approximation on $X^+$, where 
$d(\s)$ is the virtual dimension of the moduli space of solutions to the Seiberg--Witten equation in \cite{KM97}, and $\pi^{st}_d(S^0)$ means the $d$-th stable homotopy group of spheres.
This invariant depends only on $(X, \xi, \s)$.
\qed
\end{thm}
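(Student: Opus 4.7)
The plan is to mimic Bauer and Furuta's finite dimensional approximation of the Seiberg--Witten map, adapted to the non-compact manifold $X^+$ using the weighted analytic framework of Kronheimer--Mrowka as input.

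First I would fix a smooth reference configuration $(A_0,\Phi_0)$ on $X^+$ extending the canonical almost K\"ahler data at infinity, and work with fluctuations $(a,\phi)$ in the weighted Sobolev space $L^2_{k,\delta}$ used in \cite{KM97}, so that the perturbed Seiberg--Witten map
\[
\mu(a,\phi) = \bigl(F_{A_0+a}^+ + (\Phi\Phi^*)_0 - \rho,\; D_{A_0+a}\Phi\bigr)
\]
becomes a smooth map between Hilbert spaces. The hypothesis $H^1(X,\partial X;\R)=0$ enters at the gauge fixing step: combined with the exponential decay built into the weight, the Coulomb condition $d^*a=0$ cuts out a global slice for the full gauge group action, and the nonvanishing reference spinor $\Phi_0$ breaks the residual constant $S^1$-symmetry so that the slice is genuinely irreducible at the basepoint.

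Next I would split $\mu=\ell+c$ into its linearisation at $(A_0,\Phi_0)$ and the quadratic remainder. On the Coulomb slice $\ell$ is Fredholm of index $d(\s)$, by the same index computation underlying Kronheimer--Mrowka's virtual dimension formula, while $c$ is a bundle map of quadratic type. Following Furuta, I would choose an exhausting sequence $V_n$ of finite dimensional subspaces of the target, set $U_n=\ell^{-1}(V_n)\cap B_R$ for a large ball $B_R$, and form $f_n=\mathrm{pr}_{V_n}\circ\mu|_{U_n}:U_n\to V_n$. One-point compactification yields pointed maps $f_n^+:U_n^+\to V_n^+$ whose common stable homotopy class, after desuspending by the isomorphism part of $\ell$, defines the element $\Psi(X,\xi,\s)\in\pi^{st}_{d(\s)}(S^0)$.

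The main obstacle is compactness of the nonlinear tail $c$ on the non-compact $X^+$. On a closed 4-manifold this is immediate from Rellich, but on $X^+$ a bounded sequence could in principle concentrate at infinity and escape the weighted space. I would overcome this by combining the a priori $C^0$-bound on $\Phi$ coming from the Weitzenb\"ock identity on the almost K\"ahler end (Kronheimer--Mrowka's central compactness estimate) with weighted Sobolev multiplication inequalities, concluding that bounded sequences in $L^2_{k,\delta}$ subconverge in $L^2_{k-1,\delta'}$ for some $\delta'>\delta$, a norm in which $c$ is continuous. Once this is established, independence of the resulting stable class from the remaining choices (weight $\delta$, almost K\"ahler metric on the end, reference configuration, cutoff defining the cone) follows by joining any two choices through a one-parameter family and running the approximation in families to produce a stable homotopy; the sign ambiguity is exactly the two-fold choice of orientation on $\det\ell$, consistently available precisely because $H^1(X,\partial X;\R)=0$.
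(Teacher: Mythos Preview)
Your outline matches the paper's strategy closely: weighted Sobolev spaces on $X^+$, a Coulomb-type global slice enabled by $H^1(X,\partial X;\R)=0$, decomposition $\mu=\ell+c$ with $\ell$ Fredholm and $c$ compact, then Furuta's approximation. However you gloss over precisely the point the paper singles out as ``the most important and new difficulty'': obtaining Fredholmness \emph{simultaneously} with the global slice.

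Kronheimer--Mrowka's Fredholm theorem is for the linearisation together with the \emph{local} slice operator $\delta^{*}_{(A_0,\Phi_0)}(a,\phi)=-d^{*}a+i\operatorname{Re}\langle i\Phi_0,\phi\rangle$, not for the Coulomb operator $d^{*}a$. The difference between the two is the zeroth-order term $\phi\mapsto\langle\Phi_0,\phi\rangle$, and since $\Phi_0$ does not decay on the cone this is \emph{not} a compact perturbation in the weighted spaces; you cannot simply invoke ``the same index computation''. The paper resolves this by first proving the global slice theorem (giving an isomorphism between $\ker d^{*_{L^2_\delta}}\oplus L^2_{l,\delta}(S^+)$ and $\ker\delta^{*_{L^2_\delta}}_{(A_0,\Phi_0)}$, both being complements to $\operatorname{im}\delta_{(A_0,\Phi_0)}$) and then showing that under this identification the two restrictions of $\mathcal{D}_{(A_0,\Phi_0)}\mathcal{F}$ differ by a compact operator $K$. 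Compactness of $K$ comes from the fact that $D^+_{A_0}\Phi_0$ vanishes on the end, hence has compact support---a point not visible from your sketch.

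A smaller issue: your compactness paragraph has the weight inequality backwards (the compact inclusion is $L^2_{k,\delta}\hookrightarrow L^2_{k-1,\delta'}$ for $\delta'<\delta$), and it conflates two distinct compactness inputs. Compactness of the quadratic map $c$ follows purely from weighted multiplication $L^2_{k,\delta}\times L^2_{k,\delta}\to L^2_{k,2\delta-\epsilon}$ composed with the compact embedding into $L^2_{k-1,\delta}$; no Weitzenb\"ock bound is needed there. The $C^0$-estimate you mention belongs instead to Kronheimer--Mrowka's compactness of the moduli space, which the paper invokes (together with the global slice) to bound $(L+C)^{-1}(w_0)$ inside a fixed ball---a separate and prior step to the approximation.
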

Since $1$ is the only constant gauge transformation, this stable homotopy map is not $S^1$ euqivariant unlike the usual Bauer-Furuta invariant for closed 4-manifolds.
In the situation where $\Psi(X, \xi, \s)$ can be defined, we can recover Kronheimer--Mrowka's invariant $\mathfrak{m}(X, \xi, \s)$ as its mapping degree (Theorem \ref{recover}).
\par
Kronheimer and Mrowka proved a non-vanishing theorem for weak symplectic fillings \cite[Theorem 1.1]{KM97}.
Combining this with the fact that $\Psi(X, \xi, \s)$ recovers Kronheimer--Mrowka's invariant, we obtain a similar non-vanishing theorem.
Recall that a weak symplectic filling $(X, \omega)$ of $(\partial X, \xi)$ determines a $\spinc$ struture  $\s_\omega \in \spinc(X, \xi)$ canonically, as explained in \cite{KM97}, .
 
\begin{thm}
Let $(X, \omega)$ be a weak symplectic filling of 
$(\partial X, \xi)$ with \\$H^1(X, \partial X; \R)=0$ (This includes all Stein fillings $(X, \omega)$ of $(\partial X, \xi)$ as special cases). Then, $\Psi(X, \xi, \s_\omega)\in \pi^{st}_0(S^0) \cong \Z$ is a generator.
\qed
\end{thm}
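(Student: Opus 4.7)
The plan is to deduce this as a direct corollary of the recovery theorem (Theorem \ref{recover}) together with Kronheimer--Mrowka's non-vanishing result \cite{KM97}Theorem 1.1. First I would verify that we are in a situation where $\Psi(X, \xi, \s_\omega)$ is defined and lies in $\pi^{st}_0(S^0)$. The condition $H^1(X, \partial X; \R)=0$ is in the hypothesis, so the first theorem produces the invariant. For the parenthetical claim about Stein fillings, this condition is automatic: a Stein $4$-manifold has the homotopy type of a $2$-complex (Andreotti--Frankel), hence $H_3(X;\R)=0$, and Lefschetz duality gives $H^1(X,\partial X;\R)\cong H_3(X;\R)=0$. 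I would also check that the virtual dimension $d(\s_\omega)$ of the Seiberg--Witten moduli space for the canonical $\spinc$-structure coming from $\omega$ is zero, so that $\Psi(X, \xi, \s_\omega)\in \pi^{st}_0(S^0)\cong \Z$ makes sense; this is a standard index computation for the almost K\"ahler canonical $\spinc$-structure on $X^+$.

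Next, I invoke Theorem \ref{recover}: Kronheimer--Mrowka's invariant $\mathfrak{m}(X, \xi, \s_\omega)\in\Z$ is (up to sign) the mapping degree of $\Psi(X, \xi, \s_\omega)$. Under the identification $\pi^{st}_0(S^0)\cong \Z$ the mapping degree \emph{is} the integer corresponding to the stable homotopy class, so we obtain the equality $\Psi(X, \xi, \s_\omega)=\pm\mathfrak{m}(X, \xi, \s_\omega)$ in $\Z$.

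Finally, I apply \cite{KM97}Theorem 1.1, which asserts that for every weak symplectic filling one has $\mathfrak{m}(X, \xi, \s_\omega)=\pm 1$. Combining the two identifications yields $\Psi(X, \xi, \s_\omega)=\pm 1\in\Z$, which is a generator, as required.

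Given the ingredients, this theorem is essentially a formal consequence, so I do not expect a deep obstacle. The only points requiring care are the two compatibility checks mentioned above: confirming that the virtual dimension equals zero for $\s_\omega$ so that $\Psi$ actually lives in $\pi^{st}_0(S^0)$, and that the Stein-filling parenthetical is genuinely subsumed by the weak-symplectic-filling hypothesis (which is classical). Everything else is simply chaining the recovery theorem together with Kronheimer--Mrowka's non-vanishing.
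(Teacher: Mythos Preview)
Your proposal is correct and follows exactly the paper's approach: the paper deduces this result (stated again as Corollary~\ref{nonvanishing}) by combining the recovery theorem (Theorem~\ref{recover}) with Kronheimer--Mrowka's non-vanishing result \cite{KM97}~Theorem~1.1, and your write-up simply fills in the routine checks (that $d(\s_\omega)=0$ and that Stein fillings satisfy the cohomological hypothesis) which the paper leaves implicit.
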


 Bauer gave a connected sum formula which describes the behavior of  Bauer-Furuta invariants under connected sum operations of closed 4-manifolds (\cite{Bau04}).
In this paper, we examine the behavior of Bauer-Furuta invariants for a connected sum of a closed 4-manifold and a 4-manifold with contact boundary.
For a closed oriented connected 4-manifold $X$ with $b_1(X)=0$ equipped with a $\spinc$ structure $\s$, 
write
\[
\Psi(X, \s) \in \pi^{st, \mathbb{T}}_{d(\s)+1}(S^0)
 \] 
 the Bauer-Furuta invariant of $(X, \s)$ as defined in \cite{BF04} and let
 \[
 \tilde{\Psi}(X, \s) \in \pi^{st}_{d(\s)+1}(S^0)
 \]
 be the image of $\Psi(X, \s)$ under the forgetful map that ignores the $\mathbb{T}$ action, where
\[
d(\s)=\frac{1}{4}(c_1(\s)^2-2\chi(X)-3\sigma(X))
\]
is the virtual dimension of the usual Seiberg--Witten moduli space for closed 4-manifold.

We prove the following connected sum formula in a similar way to \cite{Bau04} Theorem 1.1.
\begin{thm}
Let $(X_1, \s_1)$ be a closed oriented connected 4-manifold with $b_1(X_1)=0$ equipped with a $\spinc$ structure.
Let $(X_2, \xi, \s_2)$ be a compact oriented connected 4-manifold with $H^1(X_2, \partial X_2; \R)=0$ equipped with a contact structure on its boundary and $\s_2\in \spinc(X, \xi)$.
Then, we have
\[
\Psi(X_1\#X_2, \xi, \s_1\# \s_2)=\tilde{\Psi}(X_1, \s_1)\wedge \Psi(X_2, \xi, \s_2)\in \pi^{st}_{d(\s_1)+d(\s_2)+1}.
\]
\end{thm}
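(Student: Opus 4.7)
The strategy is parallel to Bauer's proof of \cite{Bau04} Theorem 1.1, adapted to accommodate the almost K\"ahler end of $X_2^+$. The plan is to realize $(X_1 \# X_2)^+$ as a neck-stretched manifold $X_1 \cup_{N_T} X_2^+$, where the two summands are joined along a long cylindrical neck $N_T = S^3 \times [-T,T]$ inserted at the connect-sum sphere, and then to compare the finite-dimensional approximation of the Seiberg--Witten map on the stretched manifold with the smash product of the approximations on $X_1$ and $X_2^+$.

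First I would set up a one-parameter family of metrics $g_T$ on $(X_1 \# X_2)^+$ agreeing with the fixed metric on $X_1$ outside a collar of the neck, with the almost K\"ahler cone-like metric on $X_2^+ \setminus X_2$, and with the round cylindrical metric on $N_T$. On the neck $S^3$ both $H^1$ and $\ker D_{S^3}$ vanish, so the linearized Seiberg--Witten operator on $N_T$ has spectrum uniformly bounded away from $0$ as $T$ grows. This is the geometric input that allows neck-stretching arguments: solutions and approximate solutions decay exponentially across the neck, and the Coulomb gauge slice splits (up to exponentially small error) as a direct sum of the Coulomb slices on the two pieces.

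Next I would carry out Furuta's finite-dimensional approximation of the monopole map $\mu_T$ on $(X_1 \# X_2)^+$ for large $T$ using spectral projections that respect the neck decomposition. Because $b_1(X_1)=0$ and $H^1(X_2,\partial X_2;\R)=0$, the only residual gauge symmetry on each piece separately is by constants, and on the neck it is trivial; exactly one $S^1$'s worth of constant gauge freedom survives on $X_1\#X_2$. Tracking this during the gluing is what produces the dimension shift by $+1$: the factor $\tilde{\Psi}(X_1,\s_1)$ is defined using the $S^1$-equivariant monopole map with constants divided out after smashing, while $\Psi(X_2,\xi,\s_2)$ is non-equivariant, and the merging of Coulomb slices for $X_1$ and $X_2^+$ across the neck produces precisely one extra sphere factor (from the constant imaginary gauge parameter on $X_1\#X_2$ that has no counterpart on the two separate pieces). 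After passing to stable homotopy and taking $T\to\infty$, the approximation $\mu_T$ becomes homotopic to the smash product of the approximations defining $\tilde{\Psi}(X_1,\s_1)$ and $\Psi(X_2,\xi,\s_2)$; this yields the claimed identity in $\pi^{st}_{d(\s_1)+d(\s_2)+1}(S^0)$. The numerical check $d(\s_1\#\s_2) = d(\s_1)+d(\s_2)+1$ uses additivity of $c_1^2$ and $\sigma$ under connect sum and $\chi(X_1\# X_2)=\chi(X_1)+\chi(X_2)-2$, together with the fact that Kronheimer--Mrowka's $d(\s)$ for the contact piece differs from the closed-case formula by a known constant.

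The main obstacle will be the simultaneous handling of two qualitatively different asymptotic regions: the inserted cylindrical neck at $S^3$ (finite but long) and the conical almost K\"ahler end of $X_2^+$ (infinite, with its own weighted Sobolev setup and reference configuration). I would have to show that Kronheimer--Mrowka's weighted norms at the contact end and the exponential-decay estimates on the stretched neck are compatible, so that uniform boundedness of approximate solutions, the spectral projections, and the homotopy from the connect-sum map to the smash product all hold uniformly in $T$. Once this analytic compatibility is established, the combinatorial identification with the smash product and the dimension count proceed exactly as in \cite{Bau04}.
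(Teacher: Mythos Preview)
Your plan is in the right spirit but diverges from the paper's actual mechanism in one important structural way, and it omits the one genuinely new analytic step the paper needs.

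\textbf{Mechanism.} You describe a direct neck-stretching argument: take $T\to\infty$ on $(X_1\#X_2)^+$ and compare the monopole map with the smash of the two separate maps. The paper does not do this. Instead it follows Bauer's permutation trick literally: it passes to a disjoint union of $n$ manifolds $X_i(L)=X_{i,-}\cup_{S^3}([-L,L]\times S^3)\cup_{S^3}X_{i,+}$ (with $X_{1,+}$ carrying the almost K\"ahler end and all other pieces compact), chooses an even permutation $\tau$, and compares $\coprod X_i(L)$ with the permuted gluing $\coprod X_i^\tau(L)$ at a \emph{fixed} sufficiently large $L$. A cut-off function on the necks together with a path in $SO(n)$ from $1$ to $\tau$ gives explicit homotopies $\mu_t$ between $\mu$ and $\mu^\tau$, exactly as in \cite{Bau04}; the connected sum formula is then a special case of this generalized statement. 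No limit $T\to\infty$ is taken, which avoids the issue of comparing Sobolev spaces on varying manifolds that your plan would have to confront.

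\textbf{The missing technical point.} The new content beyond \cite{Bau04} is not the compatibility of the cylindrical neck with the conical end per se, but rather the following: the a priori bounds along Bauer's homotopy come from Kronheimer--Mrowka's compactness, which produces a gauge-equivalent configuration $(A',\Phi')$ with uniform $L^2_{l,\delta}$ bounds. One must then show that the \emph{particular} representative satisfying $d^{*_{L^2_\delta}}(A-A_0)=0$ is also uniformly bounded. The paper does this using its global slice theorem (the $L^2_\delta$-orthogonal decomposition $L^2_{k,\delta}(\Lambda^1)=\mathrm{im}\,d\oplus\ker d^{*_{L^2_\delta}}$) together with the closed-range estimate $\|\xi\|_{L^2_{l+1,\delta}}\le C\|d\xi\|_{L^2_{l,\delta}}$ to bound first $a$ and then $\phi=(e^\xi-1)\Phi_0+e^\xi\phi'$. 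Your proposal does not isolate this step, and your sketch of the Coulomb-slice bookkeeping (``one extra sphere factor from the constant imaginary gauge parameter'') is not how the paper handles the gauge; the paper simply quotients $W(X(L))$ by constant $0$-forms on the closed components and notes that on the component with almost K\"ahler end only $u=1$ survives.
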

With this connected sum formula, 
we can construct a simple example 
whose Kronheimer--Mrowka's invariant is zero but the Bauer--Furuta-type  invariant is non-zero which is given by 
the connected sum of the K3 surface and the standard symplectic filling of $S^3$ by $D^4$ equipped with the $\spinc$ sturucture obtained as the connected sum of their $\spinc$ structures determined by their complex structures.
More generally, we have the following result.
\begin{thm}\label{connected sum 0}
For $i=1, 2$, let $X_i$ be a closed oriented 4-manifold, let $\s_i$ be a $\spinc$ structure on $X_i$, and suppose $(X_i, \s_i),\, (i=1, 2)$ satisfy the following conditions. 
\begin{enumerate}
\item
$X_i$ has an almost complex structure and $\s_i$ is the $\spinc$ structure determined by it.
\item
$b_1(X_i)=0$
\item
$b^+(X_i)\equiv 3\, (mod\, 4)$
\item
The usual Seiberg--Witten invariant for closed 4-manifolds $\mathfrak{m}(X_i, \s_i)$ is odd.
\end{enumerate}
Let $(X_3, \omega)$ be a weak symplectic filling of $(\partial X_3, \xi)$ with $H^1(X_3, \partial X_3; \R)=0$ and let $\s_3=\s_{\omega}\in \spinc(X_3, \xi)$ be the canonical $\spinc$ structure associated with $\omega$.
Then, 
\[
\Psi(X_1 \# X_3, \xi, \s_1\#\s_3) \in \pi^{st}_1(S^0)\cong \Z/2
\]
and
\[
\Psi(X_1 \# X_2 \# X_3, \xi, \s_1\#\s_2\# \s_3) \in \pi^{st}_2(S^0)\cong \Z/2
\]
are nonzero.
\qed
\end{thm}
\begin{ex}

All elliptic surfaces
$E(2n)\, (n=1, 2, \dots)$
equipped with the $\spinc$ structure determined by their K\"ahler sturutures
satisfy theconditions for  $(X_1, \s_1)$ and $(X_2, \s_2)$ in the statement of Theorem \ref{connected sum 0}, and all Stein fillings of contact 3-manifolds satisfy the
conditions for $(X_3, \omega)$ in the statement of Theorem \ref{connected sum 0}.

\end{ex}
In the situation of the above theorem,  Kronheimer--Mrowka's moduli space of $(X_1 \# X_3, \xi, \s_1\#\s_3)$ has formal dimension 1 and the  corresponding moduli space of  $(X_1 \# X_2 \# X_3, \xi, \s_1\#\s_2\# \s_3)$ has formal dimension 2.
Since Kronheimer--Mrowka's invariant is defined to be zero when the virtual dimension is nonzero, Kronheimer--Mrowka's invariants for $(X_1 \# X_3, \xi, \s_1\#\s_3)$, $(X_1 \# X_2 \# X_3, \xi, \s_1\#\s_2\# \s_3)$ are zero.
In fact, we can furthermore show that for any element of $\spinc(X_1 \# X_3, \xi)$ and $\spinc(X_1 \# X_2 \# X_3, \xi)$, Kronheimer--Mrowka's invariant is zero (Proposition \ref{vanish}).
\par
One consequence of the connected sum formula above is that our invariant $\Psi(X, \xi, \s)$ is invariant under the connected sum of rational homology spheres (Theorem \ref{QHS4}). 
\par
As an application of the connected sum formula, we can show the following results on the existence of a connected sum decomposition $X=X_1\#X_2$ for a 4-manifold with contact boundary $X$.
This result can be seen as a contact-boundary version of \cite{Bau04} Corollary 1.2, Corollary1.3 for closed manifolds.
\begin{thm}
Let $(X, \xi)$ be a compact oriented connected 4-manifold with contact boundary satisfying $H^1(X, \partial X; \R)=0$.
Suppose there exists $\s \in \spinc(X, \xi)$ such that $\Psi(X, \xi, \s)\neq 0$.
Moreover, suppose $X$ can be decomposed as a connected sum $X_1\#X_2$ for a closed 4-manifold $X_1$ and a 4-manifold with contact boundary $X_2$.
Then, the following holds.
\begin{enumerate}
\item
Suppose $d(\s)=0$. Then $b^+(X_1)=0$.
\item
Suppose $d(\s)=1$. Then either of the following holds.
\begin{enumerate}
\item
$b^+(X_1)=0$ 
\item
$b^+(X_1)\equiv 3\, (\mathrm{mod} 4)$ and there exist a $\spinc$ structure $\s_1$ on $X_1$ and $\s_2 \in \spinc(X_2, \xi)$ such that $\mathfrak{m}(X_1, \s_1)$ and $\mathfrak{m}(X_2, \xi, \s_2)$ are both odd.
\end{enumerate}
\item
Suppose $d(\s)=2$. Then, $b^+(X_1)\not\equiv 1 (\mathrm{mod} 4)$ holds.
Furthermore, if $b^+(X_1)\neq 0$, either of the following holds.
\begin{enumerate}
\item
$b^+(X_1)\equiv 3 \,(\mathrm{mod} 4)$ and there exists a $\spinc$ structure $\s_1$ on $X_1$ such that $\mathfrak{m}(X_1, \s_1)$ is odd.
\item
$b^+(X_1)$ is even and there exists $\s_2 \in \spinc(X_2, \xi, \s_2)$ such that $\mathfrak{m}(X_2, \xi, \s_2)$ is odd.
\end{enumerate}
\end{enumerate}
\qed
\end{thm}
\par
The most important and new difficulty in constructing our invariant $\Psi(X, \xi, \s)$ is  "to realize the Fredholmness and a global slice at the same time" in the following sense.
In Furuta's finite dimensional approximation, the existence of global slice is crucial. Following Furuta, we would like to express the Seiberg--Witten map with a global slice as a sum of a linear Fredholm operator $L$ and a compact quadratic operator $C$.
Though Kronheimer--Mrowka proved the Fredholmness of the linearized Seiberg--Witten map with a local slice, this slice doesn't seem to be a global slice. 
In order to overcome this difficulty, we give a global slice (Theorem \ref{global slice}) and prove that the linearized Seiberg--Witten map with this global slice is Fredholm (Corollary \ref{Fredholm2}).
We use this operator as $L$, which is not $d^++d^*+D^+_{A_0}$ used in the case of closed 4-manifolds \cite{Fur01}\cite{BF04}.
\par
The construction of this paper is as follows.
In section 2, we describe the geometric and analytical settings to construct our invariant $\Psi(X, \xi, \s)$.
In section 3, we establish the global slice and prove the Fredholmness of the linearized Seiberg--Witten map with this global slice. Then we carry out the finite dimensional approximation following \cite{Fur01}\cite{BF04}.
In section 4 we examine properties of $\Psi(X, \xi, \s)$ such as  the recovery of Kronheimer--Mrowka's invariant and the connected sum formula.
\section*{Acknowledgements}
This paper is a part of the author's master thesis.
I would like to express my deep gratitude to my adviser Mikio Furuta for his helpful suggestions and warm encouragements.
I would also like to thank Masaki Taniguchi for his enormous support.
I am grateful to Hokuto Konno, Nobuhiro Nakamura, Takahiro Oba for helpful comments and suggestions.
I thank Tirasan Khandhawit and Hokuto Konno for sharing me their note on Bauer's connected sum formula. 
Finally, I would like to thank the referee for many helpful comments.
The author was supported by JSPS KAKENHI Grant Number 19J23048 and the Program for Leading Graduate Schools, MEXT, Japan.
\section{Settigns}
Throughout this paper, we suppose that all manifolds and bundles are smooth.
\subsection{Geometric settings}
We describe the settings where we construct our invariant.
These are the same as those in \cite{KM97}2.(iii) except the condition $H^1(X, \partial X; \R)=0$.
\par
Let $X$ be a compact, connected oriented 4-manifold with nonempty boundary.
Suppose in addition that $H^1(X, \partial X; \R)=0$.
Note that this condition implies $\partial X$ is connected.
Let $\xi$ be a contact structure on $\partial X$, compatible with the boundary orientation.
Pick a contact 1-form $\theta$ which is positive on the positively-oriented normal field to $\xi$ and an automorphism $J$ of $\xi$ such that $J^2=-1$ and for any nonzero vector $e$ in $\xi$, $(e, Je)$ is a positive basis for $\xi$.
\par
Then, there exists a unique Riemannian metric $g_1$ satisfying the following conditions(\cite{KM97}):
\begin{enumerate}
\item
$\theta$ has unit length.
\item
$d\theta=2*\theta$
\item
$J$ is an isometry with respect to the restriction of $g_1$ to $\xi$.
\end{enumerate}
We give $\R^{\geq 1}\times \partial X$ an almost K\"ahler structure whose Riemannian metric and symplectic form are given by
\[
g_0=dt^2+t^2g_1, \quad \omega_0=\frac{1}{2}d(t^2\theta)
\]
respectively, where $t$ is the coordinate for $\R^{\geq 1}$.
We call $\R^{\geq 1}\times \partial X$ equipped with this almost K\"ahler structure the \textit{almost K\"ahler cone}, following \cite{MR06}.
This almost K\"ahler structure determines a $\spinc$ structure $\s_0$ and a pair $(A_0, \Phi_0)$ on $\R^{\geq 1}\times \partial X$  as  in Definition 2.2 in \cite{KM97},  where $A_0$ is a $\spinc$ connection of $\s_0$ and $\Phi_0$ is a section of the positive spinor bundle of $\s_0$. 
\par
Fix a $\spinc$ structures $\s$ on $X^+$ equipped with an isomorphism $\s \to \s_0$ on $X^+\setminus X$ and a smooth extension of the pair $(A_0, \Phi_0)$ to $X^+$ which belongs to $\s$.
We use the same notation $(A_0, \Phi_0)$ for these extensions.
\subsection{Weighted Sobolev spaces on manifolds with cone-like ends}
Though weighted Sobolev spaces were not used in \cite{KM07}, we need them in order to do the finite dimensional approximation. 
Weighted Sobolev spaces work effectively to show the compactness of the quadratic part of the Seiberg--Witten equation(See section 3).
\par
Fix a smooth map $\sigma: X^+\to \R^{\geq 0}$
such that the restriction $\sigma|_{\R^{\geq 1}\times \partial X}$ agrees with the $\R^{\geq 1}$ coordinate and $\sigma(X)\subset [0, 1]$.
Let $E$ be a smooth vector bundle equipped with a fiberwise inner product  and a unitary connection $A$ on $X^+$.
Let $C^\infty_0(E)$ denote the set of all smooth sections with compact support.
For $l \in \Z^{\geq 0}$, $\alpha \in \R$, 
define a norm on  $C^\infty_0(E)$ by
\[
\|s\|_{L^2_{l
, A, \alpha}(X^+; E)}
=\left(
\sum^l_{j=0} \int_{X^+}|e^{\alpha \sigma}\nabla^j_A s|^2
\right)^{1/2}
\]
for $s \in C^\infty_0(E)$.
We write $L^2_{k,\alpha}(E)$ for the completion of $C^\infty_0(E)$ with respect to the norm $\|\cdot\|_{L^2_{k, A, \alpha}}$.
In the case of $\alpha=0$, we omit denoting $\alpha$.
\par
We summarize properties of these Sobolev norms used in this paper, which can be shown by standard arguments.

\begin{lem}\label{multiplication and compact embedding}
Let $(E_1, |\cdot|_1, A_1), (E_2, |\cdot|_2, A_2)$ be two normed vector bundles equipped with a unitary connection on $X^+$.
Set $W_n=\sigma^{-1}([2^{n-1}, 2^n]) \subset X^+$.
Denote by $\varphi_n: W_1\to W_n$ the diffeomorphism $(t, y)\mapsto (2^{n-1}t,  y)$.
For $i=1, 2$, suppose isomorphisms
\[
(\varphi^*_n E_i)|_{W_1} \cong  E_i|_{W_1}
\]
are given and
there exist constants $a_1, a_2$ such that
\[
|\varphi_n^*s|_{(t, y)}=2^{a_i n}|s|_{\varphi_n(t, y)}
\]
\[
|\nabla^j \varphi^*_n s|_{(t, y)}=2^{(a_i-j)n}|\nabla^j s|_{\varphi_n(t, y)}
\]
for $s \in \Gamma(E_i)$, where we regard $\varphi_n^*s$, $\nabla^j \varphi^*_n $ as  sections of $E_i|_{W_1}$, $(E_i\otimes (T^*X^+)^{\otimes j})|_{W_1}$ respectively by the isomorphism above.
(For example, if $E_1$ is $\Lambda^k$, $|\cdot|_1$ is the norm induced by the Riemannian metric $g_0$, and $A_1$ is the connection induced by the Levi-Civita connection , then, an isomorphism $(\varphi^*_n E_1)|_{W_1} \cong  E_1|_{W_1}$ can be given by regarding  $W_n=W_1$ as a manifold (but the metrics are different) and $a_1=-k$ satisfies the condition.)
\begin{enumerate}
\item (Multiplication)\\
For $\alpha_1,\alpha_2 \in \R$, $l\in \Z^{>2}, \epsilon \in \R^{>0}$, the multiplication 
\[
L^2_{l, A_1, \alpha_1}(E_1)\times L^2_{l, A_2, \alpha_2}(E_2)\to L^2_{l,A_1\otimes A_2, \alpha_1+\alpha_2-\epsilon}(E\otimes F)
\]
is continuous.
\item (Compact embedding)\\
For $l \in \Z^{\geq 1},
\alpha'<\alpha$,
the inclusion 
\[
L^2_{l, A_1, 
\alpha}(E_1)\to L^2_{l-1, A_1, 
\alpha'}(E_1)
\] 
is compact.
\end{enumerate}
\end{lem}
\begin{proof}
We will show (1).
The argument is a modification of the proof of Theorem 13.2.2 of \cite{KM07}.
Let $s $ be a smooth,  compactly supported section of $E_i$ ($i=1$ or $2$).
By the assumption, 
\[
\begin{split}
\|\nabla^j_{A_i} s\|_{L^2(E_i, W_n)}
&=\left(\int_{W_n}|\nabla^j_{A_i} s|^2 vol\right)^{1/2}\\
&=\left(\int_{W_1}2^{2(a_i-j)}|\nabla^j_{A_i} s|^2 2^4 vol\right)^{1/2}\\
&=2^{(a_i-(j-4/2))n}\|\nabla^j \varphi^*_ns\|_{L^2(E_i; W_1)}
\end{split}
\]
and thus
\[
\|s\|^2_{L^2_l(W_n; E_i)}=\sum^l_{j=0}\|\nabla^j_{A_i} s\|^2_{L^2_l(W_n; E_i)}=\sum^l_{j=0}\{2^{-(j-a_i-4/2)n} \|\nabla^j_{A_i}s\|_{L^2(W_n; E_i)}\}^2 .
\]
This yields
\[
 c^n_{ a_i}\|\varphi^*_n s\|_{L^2_l(W_1; E_i)}\leq \|s\|_{L^2_l(W_n; E_i)}\leq c^n_{a_i}\|\varphi^*_n s\|_{L^2_l(W_1; E_i)}, 
\]
where
\[
c_{a}=2^{a-(l-4/2)}, \quad c'_{a}=2^{a+4/2}.
\]
\par
(1) follows if we can show that there is a constant  $C>0 $ which is independent of $n=0, 1, 2, \cdots$ such that 
\[
\|s_1\otimes s_2\|_{L^2_{l, A_1\otimes A_2, \alpha_1+\alpha_2-\epsilon}(W_n; E_1\otimes E_2)}\leq C \|s_1\|_{L^2_{l, A_1, \alpha_1}(W_n; E_1)} \|s_2\|_{L^2_{l, A_2, \alpha_2}(W_n; E_2)}.
\]
Indeed, 
\[
\begin{split}
\|s_1\otimes s_2\|_{L^2_{l, A_1\otimes A_2, \alpha_1+\alpha_2-\epsilon}(X^+; E_1\otimes E_2)}
&=\left(\sum^\infty_{n=0}\|s_1\otimes s_2\|^2_{L^2_{l,A_1\otimes A_2 , \alpha_1+\alpha_2}(W_n;E_2)}\right)^{1/2}\\
&\leq
C\left(\sum^\infty_{n=0}\|s_1\|^2_{L^2_{l, A_1, \alpha_1}(W_n; E_1)}\|s_2\|^2_{L^2_{l, \alpha_2}(W_n;E_2)}\right)^{1/2}\\
&\leq
C\left(\sum^\infty_{n=0}\|s_1\|_{L^2_{l, A_1, \alpha_1}(W_n; E_1)}\|s_2\|_{L^2_{l, \alpha_2}(W_n;E_2)}\right)\\
&\leq
C\left(\sum^\infty_{n=0}\|s_1\|_{L^2_{l, A_1, \alpha_1}(W_n; E_1)}\right)^{1/2}\left(\sum^\infty_{n=0}\|s_2\|_{L^2_{l, \alpha_2}(W_n;E_2)}\right)^{1/2}\\
&=C\|s_1\|_{L^2_{l, A_1, \alpha_1}(X^+; E_1)}\|s_2\|_{L^2_{l, A_2, \alpha_2}}.
\end{split}
\]
Here, we use the fact that the $l^2$ norm is no bigger than the $l^1$ norm and H\"older's inequality.
We have
\[
\begin{split}
\|s_1\otimes s_2\|_{L^2_l(W_n)}
&\leq(c'_{a_1+a_2})^n \|\varphi^*_n(s_1\otimes s_2)\|_{L^2_l(W_1)}\\
&\leq (c'_{a_1+a_2})^n C_1\|\varphi^*_ns_1\|_{L^2_l(W_1)}\|\varphi^*_n s_2\|_{L^2_l(W_1)}\\
&\leq \frac{(c'_{ a_1+a_2})^nC_1}{c^n_{a_1}c^n_{a_2} }\|\varphi^*_ns_1\|_{L^2_l(W_n)}\|\varphi^*_n s_2\|_{L^2_l(W_n)}\\
\end{split}
\]
where
$C_1$ is a constant of the product estimate $L^2_l\times L^2_l \to L^2_l$ on $W_1$.
Thus, 
\[
\begin{split}
\|s_1\otimes s_2\|_{L^2_{l, \alpha_1+\alpha_2-\epsilon}(W_n)}
&=\|e^{(\alpha_1+\alpha_2-\epsilon)\sigma}(s_1\otimes s_2)\|_{L^2_l{W_n}}\\
&\leq \|e^{-\epsilon \sigma}\|_{C^l(W_n)}\|e^{\alpha_1\sigma}s_1\|_{L^2_l(W_n)}\|e^{\alpha_2\sigma}s_2\|_{L^2_l(W_n)}\\
&\leq \|e^{-\epsilon \sigma}\|_{C^l(W_n)}\frac{(c'_{ a_1+a_2})^nC_1}{c^n_{a_1}c^n_{a_2} }\|e^{\alpha_1\sigma}s_1\|_{L^2_l(W_n)}\|e^{\alpha_2\sigma}s_2\|_{L^2_l(W_n)}\\
&=\|e^{-\epsilon \sigma}\|_{C^l(W_n)}\frac{(c'_{ a_1+a_2})^nC_1}{c^n_{a_1}c^n_{a_2} }\|s_1\|_{L^2_{l, A_1, \alpha_1}(W_n)}\|e^{\alpha_2\sigma}s_2\|_{L^2_{l, A_2, \alpha_2}(W_n)}.\\
\end{split}
\]
Since
\[
\|e^{-\epsilon \sigma}\|_{C^l(W_n)}
\leq \text{const.} e^{-\epsilon {2^{n-1}}}, 
\] 
$\|e^{-\epsilon \sigma}\|_{C^l(W_n)}\frac{(c')^n_{ a_1+a_2}C_1}{c^n_{a_1}c^n_{ a_2} }$ is bounded above by a positive constant independent of $n$. Thus the conclusion of (1) holds.
\par
Next, we will show (2). The arguments below is the same as the proof of the Theorem (3.12) of \cite{Loc87}.
Suppose a sequence $(s_k) \subset L^2_{l, \alpha}$ satisfies
\[
\|s_k\|_{L^2_{l, \alpha}}\leq C_0
\]
for a constant $C_0>0$.
Set $\epsilon:=\alpha-\alpha'$, $E_R:=\sigma^{-1}(\R^\geq{R}), K_R:=\sigma^{-1}(\R^{\leq R})$ for $R>1$.
Then, 
\[
\begin{split}
\|s_k\|_{L^2_{l-1, \alpha'}(E_R)}
&=\left(\sum^{l-1}_{j=0}\|e^{(\alpha-\epsilon)\sigma}\nabla^j s_k\|^2_{L^2(E_R)}\right)^{1/2}\\
&\leq e^{-\epsilon R}\left(\sum^{l-1}_{j=0}\|e^{\alpha\sigma}\nabla^j s_k\|^2_{L^2(E_R)}\right)^{1/2}\\
&=e^{-\epsilon R}\|s_k\|_{L^2_{l-1, \alpha}(E_R)}\\
&\leq C_0e^{-\epsilon R}.
\end{split}
\]
We claim that for any natural number $n$, there exists a subsequence $(s'_k)$ such that for all $i, j$, 
\[
\|s'_i-s'_j\|\leq 2^{-n}.
\]
Indeed, we can take $R>1$ such that
\[
\|s_i-s_j\|_{L^2_{l-1, \alpha'}(E_R)}\leq 2^{-n}/\sqrt{2}
\]
by the inequality shown above.
On the other hand, since $K_R$ is compact, we can take a subsequence $(s'_k)$ such that
\[
\|s'_i-s'_j\|_{L^2_{l-1, \alpha'}(K_R)}\leq 2^{-n}/\sqrt{2} 
\]
for all $i, j$ by Rellich's lemma.
Thus, 
\[
\|s'_i-s'_j\|_{L^2_{l-1, \alpha'}(X^+)}=\sqrt{\|s'_i-s'_j\|^2_{L^2_{l-1, \alpha'}(E_R)}+\|s'_i-s'_j\|_{L^2_{l-1, \alpha'}(K_R)}}\leq 2^{-n}
\]
holds.
This shows the claim.
Thus, by diagonal argument, we can take a subsequence  $(s''_k)$ of $(s_k)$ such that
\[
\|s''_i-s''_j\|_{L^2_{l-1, \alpha'}(X^+)}\leq 2^{-\min\{i, j\}}
\]
\end{proof}
\subsection{Analytic settings}
Suppose $l\in \Z^{\geq 4}$ and $\alpha \geq 0$.
Define 
\[
\Con_{l,\alpha}=\{(A,\Phi) | A-A_0 \in L^2_{l,
\alpha}, \text{and}\, \Phi-\Phi_0\in L^2_{l,
{\alpha}, A_0}\}
\]
\[
\G_{l+1,\alpha}=\{ X^+ \to  \C | |u|=1, \text{and}\, 1-u\in L^2_{l+1,
{\alpha}}\}.
\]
Then $\G_{l+1,
{\alpha}}$ smoothly acts on $\Con_{l,
{\alpha}}$  by 
\[
u\cdot(A, \Phi)=(A-u^{-1}du, u\Phi)
\]
as usual.
For $(A, \Phi)\in \Con_{l,
{\alpha}}$, consider the Seiberg--Witten equation on $X^+$ as in \cite{KM97}:
\begin{equation}\label{SWeq}
\begin{split}
\frac{1}{2}F^+_{A^t}-\rho^{-1}(\Phi\Phi^*)_0&=\frac{1}{2}F^+_{A^t_0}-\rho^{-1}(\Phi_0\Phi^*_0)_0\\
D^+_{A}\Phi&=0
\end{split}
\end{equation}
where  $\rho : i\Lambda^+ \to i\su(S^+)$ is the isomorphism given by the Clifford multiplication.

We denote the map corresponding to the equation by 
\[
\begin{split}
\mathcal{F}: \Con_{l,
{\alpha}}&\to L^2_{l-1,
{\alpha}}(i\Lambda^+ \oplus S^-)\\
(A, \Phi)&\mapsto \left(\frac{1}{2}F^+_{A^t}-\rho^{-1}(\Phi\Phi^*)_0-\left( \frac{1}{2}F^+_{A^t_0}-\rho^{-1}(\Phi_0\Phi^*_0)_0\right), D^+_A \Phi\right).
\end{split}
\].
Note that by the product estimate of Lemma {multiplication and compact embedding}, this map is well-defined.
Indeed, if we write $A=A_0+a$, $\Phi=\Phi_0+\phi$, we have
\[
\frac{1}{2}F^+_{A^t}-\rho^{-1}(\Phi\Phi^*)_0-\left( \frac{1}{2}F^+_{A^t_0}-\rho^{-1}(\Phi_0\Phi^*_0)_0\right)
=d^+a -\rho^{-1}(\phi\phi^*)_0-\rho^{-1}(\Phi_0\phi^*+\phi \Phi^*_0)
\]
\[
D^+_A\Phi=D^+_{A_0}\Phi_0+D^+_{A_0}\phi+\rho(a)\Phi_0+\rho(a)\phi.
\]
and
 $D^+_{A_0}\Phi_0$ is zero on the cone by the definition of $(A_0, \Phi_0)$.

\section{Construction}
In this section, we do the finite dimensional approximation of the Seiberg-Witten equation with a slice, and construct our Bauer--Furuta-type invariant.
The construction is similar to that of \cite{Fur01}\cite{BF04}; that is, we decompose Seiberg--Witten map with a global slice into the sum $L+C: V\to W$, where $L$ is a linear Fredholm map and $C$ is a quadratic compact map and make the finite dimensional approximation of it, but we have to modify the following three points.
\begin{enumerate}
\item
Since $X^+$ is noncompact, we can't apply Rellich's theorem.
In order to ensure the compactness of the quadratic term $C$,  we introduce weighted Sobolev spaces.
\item
As we will show in Proposition \ref{Fredholmness 1}, we can directly deduce the Fredholmness of $L'=\mathcal{D}_{(A_0, \Phi_0)}\F+\delta_{(A_0, \Phi_0)}^{
{*, \alpha}}$ from the Fredholm theory in \cite{KM97} when $\alpha>0$ is sufficiently small.  (where $\delta_{(A_0, \Phi_0)}^{*, \alpha}$ is the $L^2_\alpha$ formal adjoint of infinitesimal gauge action at $(A_0, \Phi_0)$).
Our construction is based on this result.
This is the reason why we don't use the operator $d^++d^*+D^+_{A_0}$, which is used in the case of closed manifolds \cite{Fur01}\cite{BF04} as the linear part, since the author doesn't know whether it is Fredholm or not in our setting. 
\item
In order to do the finite dimensional approximation, we need a global slice.
The local slice  $\ker \delta_{(A_0, \Phi_0)}^{
{*, \alpha}}$ doesn't seem to give a global slice, so we use instead 
$\ker d^{*, \alpha}$.
Then, we have to show two things.
First, we must show that  $d^{
{*, \alpha}}$ actually gives a global slice in the sense of Theorem \ref{global slice} below.
Second,  We have to show the Fredholmness of $L=\mathcal{D}_{(A_0, \Phi_0)}\F+d^{
{*, \alpha}}$. 
\end{enumerate}
\subsection{Fredholmness of the linearized Seiberg-Witten map with a local slice}
In this subsection, we see an immediate consequence of the Fredholm theory of \cite{KM97}, on which our argument is based.
\par
Suppose $\alpha \geq 0$.
Denote the infinitesimal gauge action at $(A_0, \Phi_0)$ by
\[
\begin{split}
\delta_{(A_0, \Phi_0)}: i\Omega^0(X^+)&\to \Gamma(X^+; i\Lambda^1 \oplus S^+)\\
f&\mapsto (-df, f\Phi_0)
\end{split}
 \]
 and its $L^2_\alpha$ formal adjoint by
 \[
 \begin{split}
\delta^{
{*, \alpha}}_{(A_0, \Phi_0)}: \Gamma(X^+; i\Lambda^1 \oplus S^+)&\to i\Omega^0(X^+)\\
(a, \phi)&\mapsto -d^{
{*, \alpha}}a+i\text{Re}\langle i\Phi_0, \phi\rangle.
\end{split}
 \]
 For $k \in \Z^{\geq 1}$, define 
 \[
L':  L^2_{k,
{\alpha}}(i\Lambda^1 \oplus S^+) \to L^2_{k-1,
{\alpha}}(i\Lambda^{2+} \oplus S^+\oplus i\Lambda^0)
\]
by
\[
L'
\begin{bmatrix}
a\\
\phi
\end{bmatrix}
=
\begin{bmatrix}
\cD_{(A_0, \Phi_0)}\mathcal{F}(a, \phi)
\\
\delta^{
{*, \alpha}}_{(A_0, \Phi_0)}(a, \phi).
\end{bmatrix}
\]
\begin{prop}\label{Fredholmness 1}
For $k\in \Z^{\geq1}$ and sufficiently small $\alpha>0$, $L'$ is Fredholm and its index is given by
\[
\Index{L'}=d(\s)=\langle e(S^+, \Phi_0), [X, \partial X]\rangle.
\]
in terms of the relative Euler class of $S^+$ with respect to the section $\Phi_0$.
\end{prop}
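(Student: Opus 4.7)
The plan is to deduce both the Fredholmness of $L'$ and the stated index formula directly from Kronheimer--Mrowka's Fredholm theory in \cite{KM97}, by treating $L'$ as a small zeroth-order perturbation (measured in $\delta$) of the corresponding operator at $\delta=0$.

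First, at $\delta=0$ the formal adjoint $\delta^{*_{L^2_\delta}}_{(A_0,\Phi_0)}$ reduces to the ordinary $L^2$-formal adjoint of the infinitesimal gauge action, and the resulting operator $L'_0$ is exactly the linearized Seiberg--Witten map coupled with the standard local slice on $X^+$ whose Fredholm theory is developed in \cite{KM97}. From that theory, $L'_0 : L^2_k \to L^2_{k-1}$ is Fredholm of index $\langle e(S^+,\Phi_0),[X,\partial X]\rangle = d(\s)$. This supplies the base case.

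Next I would compare $L'$ with $L'_0$ by means of the isometry $U_\delta : L^2_{k,\delta} \to L^2_k$, $s\mapsto e^{\delta\sigma}s$. A direct integration by parts shows that $d^{*_{L^2_\delta}}$ and $d^{*}$ differ by the bounded zeroth-order term $-2\delta\,\iota_{\nabla\sigma}$, and consequently
\[
U_\delta L' U_\delta^{-1} = L'_0 + \delta R_\delta ,
\]
where $R_\delta$ is a bounded zeroth-order operator whose norm is controlled uniformly in $\delta$ thanks to the bound $|\nabla\sigma|\le 1$ on all of $X^+$ (on the cone $\sigma$ is the radial coordinate $t$, and $|\partial_t|_{g_0}=1$). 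Hence $\|\delta R_\delta\|=O(\delta)$ as $\delta\to 0^+$.

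Since the set of Fredholm operators is open in operator norm and the index is locally constant on it, Step~1 together with the bound on $\delta R_\delta$ implies that $L'_0+\delta R_\delta$ remains Fredholm of index $d(\s)$ for all sufficiently small $\delta>0$. Transferring back through $U_\delta$ gives the same conclusion for $L'$ itself. The main technical point to verify carefully is the conjugation identity displayed above, including the fact that the terms arising from derivatives of $\sigma$ contribute only bounded zeroth-order perturbations of norm $O(\delta)$ rather than perturbations that could shift the indicial structure of the operator at the conical infinity; this works precisely because the weight parameter $\delta$ is kept in a small neighborhood of $0$ and $\nabla\sigma$ is globally bounded.
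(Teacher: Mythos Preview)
Your proof is correct and follows essentially the same approach as the paper's own (very brief) argument: cite \cite{KM97} Theorem~3.3 for $\delta=0$, conjugate by the isometry $e^{\delta\sigma}$ between $L^2_{k,\delta}$ and $L^2_k$, and use that Fredholmness is an open condition with locally constant index. Your write-up simply makes explicit the bounded zeroth-order perturbation $\delta R_\delta$ that the paper leaves implicit; one small imprecision is the claim $|\nabla\sigma|\le 1$ on all of $X^+$, which is only guaranteed on the cone, but on the compact piece $X$ the gradient is bounded anyway, so the conclusion $\|\delta R_\delta\|=O(\delta)$ stands.
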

\begin{proof}
In the case $\alpha=0$, the result is proved in \cite{KM97} Theorem 3.3.
Here, in order to make clearfy the dependance of $L'$ on $\alpha$, we denote it by $L'_\alpha$.
It can be written explicitly as follows:
\[
L'_\alpha
\begin{bmatrix}
a\\
\phi
\end{bmatrix}
=
\begin{bmatrix}
d^+a-\rho^{-1}(\Phi_0\phi^*+\phi\Phi^*_0)_0\\
D^+_{A_0}\phi+\rho(a)\Phi_0\\
-d^{
{*, \alpha}}a+i\text{Re}\langle i\Phi_0, \phi\rangle
\end{bmatrix}
\]
By the definition of $L^2_\alpha$ inner product,  for a 0-form $f$  and a 1-form $\eta$ on $X^+$ which are smooth and have compact support, 
\[
\langle df, \eta \rangle_{L^2_\alpha}
=\int_{X^+}e^{2\alpha \sigma}\langle df, \eta\rangle
=\int_{X^+}e^{2\alpha \sigma}\langle f,  e^{-2\alpha \sigma}d^*e^{2\alpha \sigma}\eta\rangle
=\langle f, e^{-2\alpha \sigma}d^*e^{2\alpha \sigma} \eta \rangle_{L^2_\alpha}
\]
holds, so we have
\[
d^{*, \alpha}=e^{-2\alpha\sigma}d^*e^{2\alpha \sigma}.
\]
Consider the family of operators 
\[
e^{\alpha \sigma}\circ L'_\alpha \circ e^{-\alpha\sigma}: L^2_l \to L^2_{l-1}.
\]
This can be expressed explicitly as
\[
e^{\alpha \sigma}\circ L'_\alpha \circ e^{-\alpha\sigma}
\begin{bmatrix}
a\\
\phi
\end{bmatrix}
=
\begin{bmatrix}
d^+a-\alpha (d\sigma\wedge a)^+ -\rho^{-1}(\Phi_0\phi^*+\phi\Phi^*_0)_0\\
D^+_{A_0}\phi-\alpha \rho(d\sigma)\phi +\rho(a)\Phi_0\\
-d^*a+\alpha *^{-1}(d\sigma\wedge  *a)+i\text{Re}\langle i\Phi_0, \phi\rangle
\end{bmatrix}, 
\]
since we have
\[
 e^{\alpha \sigma}d^+e^{-\alpha\sigma}a=d^+a-\alpha (d\sigma\wedge a)^+
\]
\[
 e^{\alpha \sigma}D^+_{A_0}e^{-\alpha\sigma}\phi=D^+_{A_0}\phi-\alpha\rho(d\sigma)\phi
\]
\[
e^{\alpha \sigma}d^{*, \alpha}e^{-\alpha\sigma}a=e^{-\alpha \sigma}d^{*}e^{\alpha\sigma}a=-e^{-\alpha \sigma}*^{-1}d*e^{\alpha\sigma}a=d^*a-\alpha *^{-1}(d\sigma\wedge *a).
\]
Thus, $e^{\alpha \sigma}\circ L'_\alpha \circ e^{-\alpha\sigma}$ depends on $\alpha$ continuous way.

The fact that the multiplication by $e^{\alpha \sigma}$ is an isomorphism from $L^2_{l, \alpha}$ to $L^2_l$ and the Fredholmness being an open condition imply the conlusion.
\end{proof}
\subsection{The global slice theorem}
In this section, we shall show the global slice theorem, which claims that taking intersection with $i\ker (d^{
{*, \alpha}}: L^2_{l,
{\alpha}}(X^+; \Lambda^1)\to L^2_{l-1,
{\alpha}}(\Lambda^0)) \oplus L^2_{l,
{\alpha}}(X^+; S^+) \subset L^2_{l,
{\alpha}}(i\Lambda^1\oplus S^+)$ is effectively the same as taking the quotient by the gauge group. Later, this result is used to identify the inverse image of a certain point under $L+C$ with the moduli space and deduce the compactness of the former from Kronheimer--Mrowka's compactness result for the latter.
\par
\begin{lem}\label{decomposition}
For $l \in \Z^{\geq 1}$ and $\alpha>0$ enough small, we have a $L^2_\alpha$ orthogonal decomposition
\[
L^2_{l,
{\alpha}}(X^+; \Lambda^1)=\im (d: L^2_{l+1,
{\alpha}}(\Lambda^0)\to L^2_{l,
{\alpha}}(\Lambda^1))\oplus \ker (d^{
{*, \alpha}}: L^2_{l,
\alpha}(\Lambda^1)\to L^2_{l-1,
{\alpha}}(\Lambda^0)).
\]
Furthermore, each summand is a closed subspace.
\end{lem}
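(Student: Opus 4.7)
The orthogonality of the two summands is formal: for $f \in L^2_{k+1,\delta}(\Lambda^0)$ and $\eta \in \ker d^{*_{L^2_\delta}}$, integration by parts gives
\[
\langle df,\eta\rangle_{L^2_\delta}=\langle f, d^{*_{L^2_\delta}}\eta\rangle_{L^2_\delta}=0,
\]
where the justification is density of $C^\infty_0$ in $L^2_{k+1,\delta}$ (for $\delta>0$ the exponential weight kills any boundary contribution at infinity). Similarly, triviality of the intersection follows by setting $\alpha=df=\eta$ and computing $\|df\|_{L^2_\delta}^2=\langle d^{*_{L^2_\delta}}df,f\rangle_{L^2_\delta}=0$. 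Finally $\ker d^{*_{L^2_\delta}}$ is automatically closed because $d^{*_{L^2_\delta}}\colon L^2_{k,\delta}\to L^2_{k-1,\delta}$ is bounded.

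The substantive content is the existence of the decomposition. Given $\alpha\in L^2_{k,\delta}(\Lambda^1)$, the plan is to produce $f\in L^2_{k+1,\delta}(\Lambda^0)$ with $\alpha-df\in\ker d^{*_{L^2_\delta}}$, i.e.\ to solve
\[
P f = d^{*_{L^2_\delta}}\alpha, \qquad P:=d^{*_{L^2_\delta}}d\colon L^2_{k+1,\delta}(\Lambda^0)\to L^2_{k-1,\delta}(\Lambda^0).
\]
The whole lemma thus reduces to showing that $P$ is an isomorphism. Injectivity is easy: as above, $Pf=0$ forces $df=0$, hence $f$ constant (since $X^+$ is connected); for $\delta>0$ the only constant in $L^2_{k+1,\delta}$ is zero, because the weighted volume of $X^+$ is infinite.

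For the analytic core, I would argue that $P$ is Fredholm on these weighted Sobolev spaces for all sufficiently small $\delta>0$. The operator is second-order elliptic with the scalar Laplacian as its principal symbol, and on the cone end $\R^{\geq 1}\times\partial X$ the Fredholm package of \cite{KM97} applies in exactly the spirit of \Cref{Fredholmness 1}: conjugation by $e^{\delta\sigma}$ is an isometry $L^2_{k,\delta}\to L^2_k$, and Fredholmness is an open condition in $\delta$, so Fredholmness at $\delta=0$ (or on a suitable reference model at infinity) propagates to small $\delta>0$. Combined with injectivity this already yields closed range and the Poincaré-type estimate $\|f\|_{L^2_{k+1,\delta}}\le C\|df\|_{L^2_{k,\delta}}$, which in turn gives closedness of $\im d$.

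Surjectivity of $P$ comes from its formal $L^2_\delta$-self-adjointness: a cokernel class is represented by some $h$ in the $L^2_\delta$-dual of $L^2_{k-1,\delta}$ with $\langle Pf,h\rangle_{L^2_\delta}=0$ for all admissible $f$, so $Ph=0$ distributionally; local elliptic regularity lifts $h$ to a classical solution, and the same Fredholm stability at small $\delta$ (now applied to the operator acting between the dual weighted spaces) together with the $\langle Ph,h\rangle_{L^2_\delta}=\|dh\|_{L^2_\delta}^2$ identity forces $h=0$. With $P$ an isomorphism, setting $f:=P^{-1}(d^{*_{L^2_\delta}}\alpha)$ and $\eta:=\alpha-df$ completes the decomposition. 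The main obstacle is precisely the Fredholm analysis of $P$ in weighted spaces; once that is in place, everything else is a functional-analytic formality along the lines just described.
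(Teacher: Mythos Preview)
Your route---reducing everything to invertibility of the second-order operator $P=d^{*_{L^2_\delta}}d$ on functions---is genuinely different from the paper's, and the Fredholm step has a real gap as written.

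The paper never analyzes $P$ or any decoupled Laplacian. It works instead with the first-order system $D=d\oplus d^{*_{L^2_\delta}}\colon\Gamma(\Lambda^0\oplus\Lambda^+)\to\Gamma(\Lambda^1)$ and its $L^2_\delta$-adjoint $D^{*_{L^2_\delta}}=d^{*_{L^2_\delta}}+d^+$, and obtains closedness of their images by observing that they arise as the restriction of the full Seiberg--Witten linearization $L'$ (respectively the companion operator from \cite{KM97}~Theorem~3.3) to the pure-form summand $i\Lambda^1\oplus 0$: since $L'$ is Fredholm by \Cref{Fredholmness 1}, and a bounded Fredholm operator takes closed subspaces to closed subspaces, $\im D^{*_{L^2_\delta}}$ and $\im D$ are closed. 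From the Hodge-type splitting $L^2_{k,\delta}(\Lambda^1)=\im D\oplus\ker D^{*_{L^2_\delta}}$ it then peels off $\im d\oplus\ker d^{*_{L^2_\delta}}$. The whole point is to piggyback on the one operator whose Fredholmness is already in hand.

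Your appeal to ``the Fredholm package of \cite{KM97}\ldots in exactly the spirit of \Cref{Fredholmness 1}'' for $P$ is not justified. Kronheimer--Mrowka's Fredholm theorem is specific to the Seiberg--Witten deformation operator; the zeroth-order coupling to the canonical spinor $\Phi_0$ on the cone end is precisely what makes that operator Fredholm on unweighted $L^2$. The paper flags this limitation explicitly at the start of Section~3, noting that even for the closely related $d^++d^*+D^+_{A_0}$ the author does not know whether Fredholmness holds in this setting. So you cannot simply assert Fredholmness of the scalar Laplacian at $\delta=0$ and perturb; you would need a separate cone-end Fredholm package (Lockhart--McOwen or $b$-calculus type) for $\Delta$ on functions, together with a check that small $\delta>0$ avoids the indicial weights. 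That may well be feasible, but it is additional machinery not supplied by \cite{KM97}, and the paper's argument is designed exactly to avoid it.
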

\begin{proof} 
We write $E=\Lambda^0\oplus \Lambda^+$, $F=\Lambda^1$.
We write
\[
D=d\oplus d^{
{*, \alpha}}: \Gamma(X^+; E)\to \Gamma(X^+; F)
 \]
and
\[
D^{
{*, \alpha}}=d^{
{*, \alpha}}+d^+: \Gamma(X^+; F)\to \Gamma(X^+; E).
 \]
These operators are the formal $L^2_\alpha$ adjoints of each other.
\begin{claim}
For $l \in \Z^{\geq 1}$ and any sufficiently small positive $\alpha$, the images of
\[
D, D^{
{*, \alpha}}: L^2_{l,
{\alpha}}\to L^2_{l-1,
{\alpha}}
\]
are closed subspaces respectively.
\end{claim}
\underline{Proof of Claim.}
In the previous section, we showed
\[
L'
\begin{bmatrix}
a\\
\phi
\end{bmatrix}
=
\begin{bmatrix}
d^+a-\rho^{-1}(\Phi_0\phi^*+\phi\Phi^*_0)_0\\
D^+_{A_0}\phi+\rho(a)\Phi_0\\
-d^{
{*, \alpha}}a+i\text{Re}\langle i\Phi_0, \phi\rangle
\end{bmatrix}
\]
is a Fredholm map from $L^2_{l,
{\alpha}}$ for any sufficiently small $\alpha>0$.
Since the image of a closed subspace under a bounded Fredholm operator is closed, $i \im D^{*, \alpha}=L'(L^2_{l, \alpha}(i\Lambda^1 \oplus 0))\cap L^2_{l-1,
{\alpha}}(i\Lambda^+ \oplus i\Lambda^0)\subset L^2_{l-1,
{\alpha}}(E)$ is closed.
This shows the claim about $D^{*, \alpha}$.
In \cite{KM97}Theorem 3.3, Kronheimer and Mrowka also proved the Fredholmness of the operator
\[
\begin{split}
L^2_l(i\Lambda^0\oplus i\Lambda^+\oplus S^-)&\to L^2_{l-1}(i\Lambda^1\oplus S^+)\\
(f, \eta, \psi)&\mapsto (-df+d^*\eta+i\text{Re}\langle i\Phi_0, \psi\rangle, f\Phi_0+D^-_{A_0}\psi-\rho(\eta)\Phi_0).
\end{split}
\]
A similar argument shows the claim about $D$. 
\qed
\par
\begin{claim}
For $l\in \Z^{\geq 1}$ and any sufficiently small positive $\alpha$, we have a $L^2_\alpha$ orthogonal decomposition
\[
L^2_{l,
{\alpha}}(F)=\im (D: L^2_{l+1,
{\alpha}}(E)\to L^2_{l,
{\alpha}}(F))\oplus \ker (D^{
{*, \alpha}}: L^2_{l,
{\alpha}}(F)\to L^2_{l-1,
{\alpha}}(E)).
\]
\end{claim}
\underline{Proof of Claim.}
By taking $l=1$ in the previous claim, we have a $L^2_\alpha$ orthogonal decomposition
\[
L^2_{\alpha}(F)=\im (D: L^2_{1,
{\alpha}}(E)\to L^2_{\alpha}(F))\oplus  (\im (D: L^2_{1,
{\alpha}}(E)\to L^2_{\alpha}(F)))^{\perp_{L^2_\alpha}}
\]
By the elliptic regularity, we have
\[
(\im (D: L^2_{1,
{\alpha}}(E)\to L^2_{\alpha}(F)))^{\perp_{L^2_\alpha}}=\ker(D^{
{*, \alpha}}: (L^2_{\alpha}\cap C^\infty)(F)\to C^\infty(E)).
\]
Thus, we have a $L^2_\alpha$ orthogonal decomposition
\[
L^2_{\alpha}(F)=\im (D: L^2_{1,
{\alpha}}(E)\to L^2_{\alpha}(F))\oplus  \ker(D^{
{*, \alpha}}: (L^2_{\alpha}\cap C^\infty)(F)\to C^\infty(E)).
\]
Next, we will derive the desired decomposition from this.
$\im (D: L^2_{l+1,
{\alpha}}(E)\to L^2_{l,
{\alpha}}(F))\cap \ker (D^{
{*, \alpha}}: L^2_{l,
{\alpha}}(F)\to L^2_{l-1,
{\alpha}}(E))=0$ is obvious.
Thus, it is sufficient to show that
\[
L^2_{l,
{\alpha}}(F)=\im (D: L^2_{l+1,
{\alpha}}(E)\to L^2_{l,
{\alpha}}(F))+ \ker (D^{
{*, \alpha}}: L^2_{l,
{\alpha}}(F)\to L^2_{l-1,
{\alpha}}(E)).
\]
Let  $x$ be an element of $ L^2_{l,
{\alpha}}(F)$ .
Since $L^2_{l,
{\alpha}} $ is contained in $ L^2_{\alpha}$, $x$ can be uniquely expressed as a sum
\[
x= D y+z, \quad y \in L^2_{1,
{\alpha}}(E), \, z \in \ker(D^{
{*, \alpha}}: (L^2_{\alpha}\cap C^\infty)(F)\to C^\infty(E)).
\]
by the decomposition we have shown above.
By the elliptic regularity of $D^{
{*, \alpha}}$, we have $z \in \ker (D^{
{*, \alpha}}: L^2_{l,
{\alpha}}(F)\to L^2_{l-1,
{\alpha}}(E))$.
In turn, the equation 
\[
D y=x-z \in L^2_{l,
{\alpha}}(F)
\]
and the elliptic regularity of $D$ imply $y \in L^2_{l+1,
{\alpha}}(E)$.
This completes the claim.
\qed
\par
\underline{Proof of Lemma.}
It is clear that
\[
\im(D: L^2_{l+1,
{\alpha}}(E))=\im (d: L^2_{l+1,
{\alpha}}(\Lambda^0)\to L^2_{l,
{\alpha}}(\Lambda^1))\oplus \im (d^{
{*, \alpha}}: L^2_{l+1,
{\alpha}}(\Lambda^+)\to L^2_{l,
{\alpha}}(\Lambda^1)).
\]
Now, we have $L^2_\alpha$ orthogonal decomposition
\[
\ker (d^{
{*, \alpha}}: L^2_{l,
{\alpha}}(\Lambda^1)\to L^2_{l-1,
{\alpha}}(\Lambda^0))=\im (d^{
{*, \alpha}}: L^2_{l+1,
{\alpha}}(\Lambda^1)\to L^2_{l,
{\alpha}}(\Lambda^0))\oplus \ker(D^{
{*, \alpha}}: (L^2_{l,
{\alpha}}(F)\to L^2_{l-1,
{\alpha}}(E)).
\]
Indeed, any $a \in \ker (d^{
{*, \alpha}}: L^2_{l,
{\alpha}}(\Lambda^1)\to L^2_{l-1,
{\alpha}}(\Lambda^0))$ can be expressed as
\[
a=db+d^{*, \alpha}c+h , \quad b \in L^2_{l+1, \alpha}(\Lambda^0), \, c \in L^2_{l+1, \alpha}(\Lambda^+), \, h \in  \ker(D^{
{*, \alpha}}: (L^2_{l,
{\alpha}}(F)\to L^2_{l-1,
{\alpha}}(E))
\] 
by the previous claim.
and it is sufficient to show $db=0$. This is true since
\[
\|db\|^2_{L^2_{\alpha}}=\langle b, d^{*, \alpha}db\rangle_{L^2_\alpha}=\langle b, d^{*, \alpha}(a-d^{*, \alpha}c-h)\rangle_{L^2_\alpha}=0.
\]
\par
Thus, using the previous claim again, we have
\[
\begin{split}
&L^2_{l,
{\alpha}}(F)\\=&\im (D: L^2_{l+1,
{\alpha}}(E)\to L^2_{l,
{\alpha}}(F))\oplus \ker (D^{
{*, \alpha}}: L^2_{l,
{\alpha}}(F)\to L^2_{l-1,
{\alpha}}(E))\\
=&\im (d: L^2_{l+1,
{\alpha}}(\Lambda^0)\to L^2_{l,
{\alpha}}(\Lambda^1))\oplus \im (d^{
{*, \alpha}}: L^2_{l+1,
{\alpha}}(\Lambda^+)\to L^2_{l,
{\alpha}}(\Lambda^1))\\
&\oplus \ker (D^{
{*, \alpha}}: L^2_{l,
{\alpha}}(F)\to L^2_{l-1,
{\alpha}}(E))\\
=&\im (d: L^2_{l+1,
{\alpha}}(\Lambda^0)\to L^2_{l,
{\alpha}}(\Lambda^1))\oplus \ker (d^{
{*, \alpha}}: L^2_{l,
{\alpha}}(\Lambda^1)\to L^2_{l-1,
{\alpha}}(\Lambda^0))
\end{split}
\]
as claimed.
\end{proof}
The following is the global slice theorem that we need in the finite dimensional approximation.
We use the condition $H^1(X, \partial X; \R)=0$ here.
\begin{prop}\label{global slice}(The global slice theorem)\,
For any sufficiently small positive $\alpha$, the map
\[
\begin{split}
((A_0, \Phi_0)+(i\ker(d^{
{*, \alpha}}: L^2_{l,
{\alpha}}(\Lambda^1)\to L^2_{l-1,
{\alpha}}(\Lambda^0)))\oplus L^2_{l,
{\alpha}}(S^+))\times \G_{l+1,
{\alpha}}
&\to \Con_{l,
{\alpha}}\\
(A, \Phi)&\mapsto u\cdot(A, \Phi)
\end{split}
\]
is a diffeomorphism.
\end{prop}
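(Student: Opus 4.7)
The plan is to reduce the statement to a direct application of the Hodge-type decomposition in \cref{decomposition}, after first identifying the gauge group $\G_{l+1,\delta}$ with a Hilbert space of functions via the exponential map. The hypothesis $H^1(X, \partial X; \R) = 0$ enters precisely to make this identification work globally.

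First I would establish that the exponential
\[
\mathcal{E}: iL^2_{l+1,\delta}(X^+; \Lambda^0) \to \G_{l+1,\delta}, \qquad f \mapsto e^f,
\]
is a diffeomorphism. That $\mathcal{E}$ is well-defined and smooth (i.e., $e^f - 1 \in L^2_{l+1,\delta}$) follows by expanding $e^f - 1 = \sum_{k\ge 1} f^k/k!$ and applying the multiplication property in \cref{multiplication and compact embedding} with $\epsilon < \delta$, which preserves the weighted space. Injectivity is clear since any element of $2\pi i \Z$ lying in $L^2_{l+1,\delta}$ must vanish. For surjectivity, given $u \in \G_{l+1,\delta}$, Sobolev embedding forces $u\to 1$ at the end of $X^+$, so for $T$ large the restriction $u|_{\{T\}\times\partial X}$ avoids $-1$ and is null-homotopic; hence $[u|_{\partial X}] = 0$ in $H^1(\partial X;\Z)$. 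The long exact sequence of $(X,\partial X)$ together with $H^1(X,\partial X;\R) = 0$ (which forces $H^1(X;\R)\hookrightarrow H^1(\partial X;\R)$) then implies that the real class $[u|_X] \in H^1(X;\R)\cong H^1(X^+;\R)$ vanishes, so the closed $1$-form $u^{-1}du$ is exact on $X^+$. Integrating along paths from infinity gives a primitive $f$ with $e^f = u$, and an elliptic bootstrap from $df \in L^2_{l,\delta}$ combined with the pointwise estimate $|f|\lesssim|1-u|$ near infinity places $f \in L^2_{l+1,\delta}$.

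Next, precomposing $\Psi$ with $\mathrm{id}\times\mathcal{E}$ reduces the problem to showing that
\[
\tilde{\Psi}\colon \bigl((A_0,\Phi_0) + i\ker d^{*_{L^2_\delta}} \oplus L^2_{l,\delta}(S^+)\bigr)\times iL^2_{l+1,\delta}(\Lambda^0) \to \Con_{l,\delta}, \qquad ((A,\Phi),f) \mapsto (A - df,\, e^f\Phi),
\]
is a diffeomorphism. The explicit inverse comes directly from \cref{decomposition}: given $(A,\Phi)\in\Con_{l,\delta}$, decompose $A-A_0 = dg + \beta$ uniquely with $g \in iL^2_{l+1,\delta}$ and $\beta \in i\ker d^{*_{L^2_\delta}}$, and set $f = -g$ together with $(A_\ast,\Phi_\ast) = (A_0+\beta,\, e^{g}\Phi)$. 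The checks that this is indeed the unique preimage and that $e^g\Phi - \Phi_0$ lies in the correct weighted space reduce, via $e^g\Phi - \Phi_0 = e^g(\Phi-\Phi_0) + (e^g-1)\Phi_0$, to \cref{multiplication and compact embedding}. Smoothness in both directions follows because the Hodge-type projection from \cref{decomposition} is a continuous linear map and the exponential is smooth.

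The main difficulty is the surjectivity of $\mathcal{E}$, i.e., producing a global logarithm of $u$ in the correct weighted Sobolev space. This is where the topological hypothesis $H^1(X,\partial X;\R) = 0$, the asymptotic decay of elements of $\G_{l+1,\delta}$, and weighted elliptic regularity all have to be combined. Everything else --- the algebraic construction of the inverse to $\tilde{\Psi}$ and the verification of smoothness --- is then a formal consequence of \cref{decomposition} and the multiplication lemma.
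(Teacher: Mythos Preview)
Your argument is correct but takes a genuinely different route from the paper. Both proofs reduce to the map $\varphi(a,u)=a-u^{-1}du$ and invoke \cref{decomposition} for surjectivity, but the injectivity arguments diverge. The paper observes that $\varphi(a,u)=0$ forces $a\in\ker d\cap\ker d^{*_{L^2_\delta}}$, and then kills this intersection by citing the identification of $L^2$-harmonic $1$-forms on conical manifolds with $H^1(X,\partial X;\R)$ due to Hausel--Hunsicker--Mazzeo, together with upper semicontinuity in $\delta$; the inverse function theorem then upgrades the bijection to a diffeomorphism. You instead prove the stronger preliminary fact that the exponential $\mathcal{E}\colon iL^2_{l+1,\delta}\to\G_{l+1,\delta}$ is onto, using the decay of $u$ at infinity and the topological hypothesis to conclude that $u^{-1}du$ is globally exact; once every $u^{-1}du$ is known to lie in $\im d$ (rather than merely in $\ker d$), injectivity is immediate from the direct-sum decomposition already contained in \cref{decomposition}, and you obtain an explicit smooth inverse without the inverse function theorem. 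Your route is more self-contained --- it avoids the HHM black box and yields as a byproduct that $\G_{l+1,\delta}$ is contractible --- at the cost of having to justify the $C^0$ decay of $1-u$ on the cone and the weighted regularity step that places the logarithm in $L^2_{l+1,\delta}$; the paper's route is terser but imports a substantial external result.
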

\begin{proof}
It is enough to show the map
\[
\begin{split}
\varphi: i\ker(d^{
{*, \alpha}}: L^2_{l,
{\alpha}}(\Lambda^1)\to L^2_{l-1,
{\alpha}}(\Lambda^0))\times 
\G_{l+1,
{\alpha}}
&\to L^2_{l,
{\alpha}}(i \Lambda^1)\\
(a, u)&\mapsto a-u^{-1}du
\end{split}
\]
is a diffeomorphism.
We will show that $\varphi$ is bijective and its derivative at any point is an isomorphism, then the conclusion holds by the inverse function theorem.
The surjectivity of $\varphi$ follows from the decomposition of Lemma \ref{decomposition}.
We will prove the injectivity of $\varphi$. 
\begin{claim}
For any sufficiently small positive $\alpha$, we have
\[
\ker(d^{
{*, \alpha}}: L^2_{l,
{\alpha}}(\Lambda^1)\to L^2_{l-1,
{\alpha}}(\Lambda^0)) \cap \ker (d: L^2_{l,
{\alpha}}(\Lambda^1)\to L^2_{l-1,
{\alpha}}(\Lambda^2)) =0.
 \]
\end{claim}
\underline{Proof of Claim.}
By \cite{HHM04} Theorem 1A, and the assumption $H^1(X, \partial X; \R)=0$, the claim follows in the case $\alpha=0$.
For small positive $\alpha$, the claim follows since the dimension of the kernel of $d+d^{
{*, \alpha}}$ is upper half continuous as we vary $\alpha$.
\qed
\par
In order to prove the injectivity of $\varphi$, it is enough to show $a=0$ and $u=1$ provided that $\varphi(a, u)=a-u^{-1}du=0$.
By assumption, $da=d^{
{*, \alpha}}a =0$, thus, $a=0$ by the above claim.
This also means that $u^{-1}du=0$, and since $1-u \in L^2_{l+1,
{\alpha}}$, $u$ must be identically $1$ on $X^+$.
\par
The fact that 
$\cD_{(a, u)}\varphi$ is isomorphic for any $(a, u)\in i\ker d^{
{*, \alpha}}\times \G_{l+1,
{\alpha}}$ also follows from the decomposition of Lemma \ref{decomposition}.
\end{proof}
As a corollary of the global slice theorem above, we can show the following Fredholmness result which we need in the finite dimensional approximation.
We write 
$V=L^2_{l,
{\alpha}}(i\Lambda^1\oplus S^+), W=L^2_{l-1,
{\alpha}}(i\Lambda^0\oplus i\Lambda^+\oplus S^-)$ and
define $L: V \to W$ by
\[
L
\begin{bmatrix}
a\\
\phi
\end{bmatrix}
=
\begin{bmatrix}
\cD_{(A_0, \Phi_0)}\F(a, \phi)\\
d^{*, \alpha}a
\end{bmatrix}.
\]

\begin{cor}\label{Fredholm2}
For any sufficiently small positive $\alpha$, $L$ is Fredholm and its index is the same as that of $L'$.
\end{cor}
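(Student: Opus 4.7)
The plan is to compare $L$ with $L'$ via the orthogonal decomposition of Lemma \ref{decomposition}. Split $V = V_1 \oplus V_2$ with $V_1 = i\ker(d^{*_{L^2_\delta}}) \oplus L^2_{l,\delta}(S^+)$ and $V_2 = i\,\mathrm{im}(d: L^2_{l+1,\delta}(\Lambda^0)\to L^2_{l,\delta}(\Lambda^1))$, and split $W = W_1\oplus W_2$ with $W_1 = L^2_{l-1,\delta}(i\Lambda^+ \oplus S^-)$ and $W_2 = L^2_{l-1,\delta}(i\Lambda^0)$. Since $d^{*_{L^2_\delta}}$ vanishes on the $\Lambda^1$-component of $V_1$, a direct computation gives the block forms
\[
L = \begin{pmatrix} L_{11} & L_{12} \\ 0 & L_{22} \end{pmatrix}, \qquad
L' = \begin{pmatrix} L_{11} & L_{12} \\ L'_{21} & -L_{22} \end{pmatrix},
\]
where $L'_{21}(a,\phi) = i\mathrm{Re}\langle i\Phi_0,\phi\rangle$ is zeroth order and $L_{22}$ corresponds, under the identification $V_2 \cong L^2_{l+1,\delta}(i\Lambda^0)$ via $d$, to the weighted Hodge Laplacian $d^{*_{L^2_\delta}}d$.

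First I would verify that $L_{22}$ is an isomorphism for sufficiently small $\delta>0$. Injectivity follows from pairing $d^{*_{L^2_\delta}}df$ with $f$ in the $L^2_\delta$ inner product (forcing $df=0$), combined with the fact that the only constant in $L^2_{l+1,\delta}$ is $0$. Surjectivity is a consequence of the closed-range assertion in Lemma \ref{decomposition} together with elliptic regularity for $d^* d$. Given this, the upper-triangular form of $L$ immediately yields that $L$ is Fredholm iff $L_{11}:V_1\to W_1$ is Fredholm, with $\mathrm{ind}(L)=\mathrm{ind}(L_{11})$. Applying the Schur-complement identity to $L'$ (valid because $-L_{22}$ is invertible) gives that $L'$ is Fredholm iff $L_{11} + L_{12}L_{22}^{-1}L'_{21}$ is Fredholm, with $\mathrm{ind}(L') = \mathrm{ind}(L_{11} + L_{12}L_{22}^{-1}L'_{21})$. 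Since $\mathrm{ind}(L') = d(\s)$ by Proposition \ref{Fredholmness 1}, it suffices to show that the correction term $L_{12}L_{22}^{-1}L'_{21}: V_1 \to W_1$ is a compact operator.

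Schematically this composition is of order $-1$: $L'_{21}$ is pointwise multiplication by $\Phi_0$ (which is smooth with pointwise-bounded norm on the almost K\"ahler cone), $L_{22}^{-1}$ gains two derivatives, and $L_{12}$ is first order. Consequently the composition maps $V_1$ boundedly into a space with one more derivative of regularity than $W_1$. I expect the main obstacle to lie precisely here: Lemma \ref{multiplication and compact embedding}(2) only provides compactness of the inclusion $L^2_{k,\delta}\hookrightarrow L^2_{k-1,\delta'}$ when $\delta'<\delta$, so the regularity gain alone does not directly yield compactness into the same-weight target $W_1$. This gap should be closed by inserting a slightly smaller intermediate weight $\delta' < \delta$ in the factorization, using the boundedness of $\Phi_0$ to absorb the weight shift and then invoking the compact embedding of Lemma \ref{multiplication and compact embedding}(2), together with the openness of the Fredholm condition under small perturbations of $\delta$ as in the proof of Proposition \ref{Fredholmness 1}. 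Combining these steps yields $\mathrm{ind}(L) = \mathrm{ind}(L_{11}) = \mathrm{ind}(L_{11}+L_{12}L_{22}^{-1}L'_{21}) = \mathrm{ind}(L') = d(\s)$.
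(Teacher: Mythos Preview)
Your block decomposition and Schur-complement reduction are set up correctly, and the invertibility of $L_{22}$ can indeed be extracted from (the symmetric version of) Lemma~\ref{decomposition}. The genuine gap is the compactness of the correction $T=L_{12}L_{22}^{-1}L'_{21}$. Your order-counting shows only that $T$ lands in $L^2_{l,\delta}$ rather than $L^2_{l-1,\delta}$, and on the noncompact manifold $X^+$ the inclusion $L^2_{l,\delta}\hookrightarrow L^2_{l-1,\delta}$ is \emph{not} compact without a weight drop. Your proposed fix cannot supply that drop: $|\Phi_0|$ is bounded (indeed constant on the cone) but does not decay, so multiplication by $\Phi_0$ gives no weight gain, and $(d^{*_{L^2_\delta}}d)^{-1}$ carries $L^2_{l-1,\delta}$ back only to $L^2_{l+1,\delta}$ with the same weight. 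Negative-order operators on cones are not compact in general (think of $(1-\Delta)^{-1}$ on $\R^n$), so this step fails as written.

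The paper closes exactly this gap by a different comparison. Instead of staying with $V_2=i\,\mathrm{im}\,d$ and computing a Schur complement, it uses that both $V_1=i\ker d^{*_{L^2_\delta}}\oplus L^2_{l,\delta}(S^+)$ and $\ker\delta^{*_{L^2_\delta}}_{(A_0,\Phi_0)}$ are transversals to $\mathrm{im}\,\delta_{(A_0,\Phi_0)}$, hence canonically isomorphic via $V/\mathrm{im}\,\delta_{(A_0,\Phi_0)}$. Writing the change of slice as $(a,\phi)=(b,\psi)+\delta_{(A_0,\Phi_0)}\chi$, one computes directly that
\[
\cD_{(A_0,\Phi_0)}\F(a,\phi)=\cD_{(A_0,\Phi_0)}\F(b,\psi)+\bigl(0,\;\chi\,D^+_{A_0}\Phi_0\bigr).
\]
The crucial geometric input is that $D^+_{A_0}\Phi_0=0$ on $X^+\setminus X$, because $(A_0,\Phi_0)$ is the canonical almost-K\"ahler configuration on the cone. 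Thus the correction has \emph{compact support}, and the map $(b,\psi)\mapsto(0,\chi D^+_{A_0}\Phi_0)$ factors through a compact inclusion such as $L^2_{l+1,2\delta}\hookrightarrow L^2_{l-1,\delta}$. This is precisely the decay mechanism your argument lacks. If you want to repair your Schur-complement route, you would need to feed in this same fact (for instance by noting that $L_{12}$ and the corresponding $(1,2)$-block for the $\mathrm{im}\,\delta_{(A_0,\Phi_0)}$-splitting differ by a compactly supported term), rather than relying on $|\Phi_0|$ being bounded.
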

\begin{proof}
We denote $d^{
{*, \alpha}}:  L^2_{l,
{\alpha}}(\Lambda^1)\to L^2_{l-1,
{\alpha}}(\Lambda^0)$ and 
$\delta^{
{*, \alpha}}_{(A_0, \Phi_0)}: L^2_{l,
{\alpha}}(i\Lambda^1 \oplus S^+)\to L^2_{l-1,
{\alpha}}(i\Lambda^0)$ just by $d^{
{*, \alpha}}$ and $\delta^{
{*, \alpha}}_{(A_0, \Phi_0)}$ respectively.
The Proposition \ref{global slice} implies that the inclusion 
\[
i\ker d^{
{*, \alpha}}\oplus L^2_{l,
{\alpha}}(S^+)
\hookrightarrow T_{(A_0, \Phi_0)}\Con_{l,
{\alpha}}
\]
induces a linear isomorphism
\[
i\ker d^{
{*, \alpha}}\oplus L^2_{l,
{\alpha}}(S^+)\to T_{(A_0, \Phi_0)}\Con_{l,
{\alpha}}/\im \delta_{(A_0, \Phi_0)}.
\]
The inclusion 
\[
\ker \delta^{
{*, \alpha}}_{(A_0, \Phi_0)} \hookrightarrow T_{(A_0, \Phi_0)}\Con_{l,
{\alpha}}
\]
also induces a linear isomorphism
\[
\ker \delta^{
{*, \alpha}}_{(A_0, \Phi_0)} \cong T_{(A_0, \Phi_0)}\Con_{l,
{\alpha}}/\im \delta_{(A_0, \Phi_0)}, 
\]
because a similar argument as the proof of Lemma \ref{decomposition} yields the $L^2_{\alpha}$ orthogonal decomposition
\[
T_{(A_0, \Phi_0)}\Con_{l,
{\alpha}}=\im \delta_{(A_0, \Phi_0)}\oplus \ker \delta^{
{*, \alpha}}_{(A_0, \Phi_0)} .
\]
For $(b, \psi) \in \ker \delta^{
{*, \alpha}}_{(A_0, \Phi_0)}$, let $(a, \phi) \in i\ker d^{
{*, \alpha}}\oplus L^2_{l,
{\alpha}}(S^+)$ be the element which corresponds to $(b, \psi)$ under the composition 
\[
\ker \delta^{
{*, \alpha}}_{(A_0, \Phi_0)} \cong T_{(A_0, \Phi_0)}\Con_{l,
{\alpha}}/\im \delta_{(A_0, \Phi_0)}\cong i\ker d^{
{*, \alpha}}\oplus L^2_{l,
{\alpha}}(S^+).
\]
We denote by $\chi=\chi(b, \psi) \in L^2_{k+1,
{\alpha}} (i\Lambda^0)$ the unique element that satisfies $(a, \phi)-(b, \psi)=\delta_{(A_0, \Phi_0)}\chi$.
We claim that the operator 
\[
\begin{split}
K: \ker \delta^{
{*, \alpha}}_{(A_0, \Phi_0)}&\to L^2_{l-1,
{\alpha}}(i\Lambda^+\oplus S^-)\\
(b, \psi)&\mapsto (0, \chi D^+_{A_0}\Phi_0)
\end{split}
\]
is compact.
Since $\delta_{(A_0, \Phi_0)}: L^2_{l+1,
{\alpha}}(i\Lambda^0)\to \im \delta_{(A_0, \Phi_0)}$ is a continuous linear isomorphim, 
its inverse is continuous by the open mapping theorem.
Thus, 
\[
\begin{split}
\ker \delta^{
{*, \alpha}}_{(A_0, \Phi_0)}&\to L^2_{l+1,
{\alpha}}(i\Lambda^0)\\
(b, \psi)&\mapsto \chi=\delta^{-1}_{(A_0, \Phi_0)}((a, \phi)-( b, \psi))
\end{split}
\]
is continuous.
Since $D^+_{A_0}\Phi_0$ is zero on $X^+\setminus X$,  the multiplication result of Lemma \ref{multiplication and compact embedding} implies that  the operator $K$ factors through the compact inclusion $L^2_{l+1, 2\alpha}(i\Lambda^+\oplus S^-)
 \hookrightarrow L^2_{l-1,
{\alpha}}(i\Lambda^+\oplus S^-)
$, for example.
 This shows that $K$ is compact  by the compactness result of Lemma \ref{multiplication and compact embedding}.
 \par
 Now, we can see that the following diagram commutes:
\[
  \begin{CD}
     (b, \psi)\in\ker \delta^{
{*, \alpha}}_{(A_0, \Phi_0)}@>{\cD_{(A_0, \Phi_0)}\F+K}>> L^2_{l-1,
{\alpha}}(i\Lambda^{+}\oplus S^-) \\
  @V{\cong}VV    @\vert\\
     (a, \phi)\in \ker d^{
{*, \alpha}} \oplus L^2_{l,
{\alpha}}(S^+)   @>\cD_{(A_0, \Phi_0)}\F>>  L^2_{l-1,
{\alpha}}(i\Lambda^{+}\oplus S^-)
  \end{CD}
\]
Indeed, since $(a, \phi)=\delta_{(A_0, \Phi_0)} \chi+(b, \psi)=(b-d\chi, \psi+\chi\Phi_0)$, we have
\[
\begin{split}
\cD_{(A_0, \Phi_0)}\F(a, \phi)
&=
\begin{bmatrix}
d^+(b-d\chi)-\rho^{-1}(\Phi_0(\psi+\chi\Phi_0)^*+(\psi+\chi\Phi_0)\Phi^*_0)_0\\
D^+_{A_0}(\psi+\chi\Phi_0)+\rho(b-d\chi)\Phi_0
\end{bmatrix}
\\
&=
\begin{bmatrix}
d^+b-\rho^{-1}(\Phi_0\psi^*+\psi\Phi_0)_0\\
D^+_{A_0}\psi+\rho(b)\Phi_0
\end{bmatrix}
+
\begin{bmatrix}
0\\
\chi D^+_{A_0}\Phi_0
\end{bmatrix}\\
&=\cD_{(A_0, \Phi_0)}\F(b, \psi)+K(b, \psi).
\end{split}
\]
This means that $L$ is also Fredholm and has the same index as $L'$.
\end{proof}
\subsection{Finite dimensional approximation}
Now, we will carry out the finite dimensional approximation and construct our Bauer-Furuta type invariant. 
We will follow the construction of \cite{Fur01}\cite{BF04}.
Now we will see that the Seiberg--Witten map with global slice can be written as the sum of the linear Fredholm map $L$ and a compact quadratic map $C$.
Define
\[
C: V\to W
\]
by 
\[
C
\begin{bmatrix}
a\\
\phi
\end{bmatrix}
=
\begin{bmatrix}
-\rho^{-1}(\phi\phi^*)_0\\
\rho(a)\phi\\
0
\end{bmatrix}, 
\]
Then, for $(A, \Phi)=(A_0+a, \Phi_0+\phi)\in \Con_{l,
{\alpha}}$, we have
\[
\begin{split}
&\begin{bmatrix}
\F(A, \Phi)\\
d^{
{*, \alpha}}a
\end{bmatrix}
\\
=&
\begin{bmatrix}
d^+a-\rho^{-1}(\Phi_0\phi^*+\phi\Phi^*_0)_0\\
D^+_{A_0}\phi+\rho(a)\Phi_0\\
d^{
{*, \alpha}}a
\end{bmatrix}
+
\begin{bmatrix}
-\rho^{-1}(\phi\phi^*)_0\\
\rho(a)\phi\\
0
\end{bmatrix}
+
\begin{bmatrix}
\frac{1}{2}F^+_{A^t_0}-\rho^{-1}(\Phi_0\Phi^*_0)_0\\
D^+_{A_0}\Phi_0\\
0
\end{bmatrix}
\\
=&
\begin{bmatrix}
\cD_{(A_0, \Phi_0)}\mathcal{F}(a, \phi)\\
d^{
{*, \alpha}}a
\end{bmatrix}
+
\begin{bmatrix}
-\rho^{-1}(\phi\phi^*)_0\\
\rho(a)\phi\\
0
\end{bmatrix}
+
\begin{bmatrix}
\frac{1}{2}F^+_{A^t_0}-\rho^{-1}(\Phi_0\Phi^*_0)_0\\
D^+_{A_0}\Phi_0\\
0
\end{bmatrix}\\
=&
L
\begin{bmatrix}
a\\
\phi
\end{bmatrix}
+
C
\begin{bmatrix}
a\\
\phi
\end{bmatrix}
+
\begin{bmatrix}
\frac{1}{2}F^+_{A^t_0}-\rho^{-1}(\Phi_0\Phi^*_0)_0\\
D^+_{A_0}\Phi_0\\
0
\end{bmatrix}.
\end{split}
\]
Thus, if we write $w_0=(0, -D^+_{A_0}\Phi_0, 0) \in W$
we have
\[
\begin{split}
&(A_0, \Phi_0)+(L+C)^{-1}(w_0)\\
=&\{(A, \Phi) \in \Con_{l,
{\alpha}} | (A, \Phi) \text{ solves the Seiberg--Witten equation and satisfies } d^{
{*, \alpha}}(A-A_0)=0\}
\end{split}
\]
as subsets in $\Con_{l,
{\alpha}}$.
\par
The first step of the finite dimensional approximation is the compactness of the moduli space.
The following compactness result is proved in \cite{KM97}3(iii)(See also \cite{MR06} Proposition 2.2.6). 
\begin{prop}(\cite{KM97}).\,
Suppose $l\in \Z^{\geq 4}$.
Then,  there exists $\alpha_0>0$ such that for any $0<\alpha<\alpha_0$ the following holds.
For any sequence of solutions $(A^{(i}, \Phi^{(i)}) \subset \Con_{l,
{\alpha}}$, there is a sequence of gauge transformations $u^{(i)} \subset \G_{l+1,
{\alpha}}$ such that after passing to a subsequence, the transformed solutions converge in $\Con_{l,
{\alpha}}$: that is, the moduli space
\[
\mathcal{M}_{l,
{\alpha}}=\{(A, \Phi) \in \Con_{l,
{\alpha}} | (A, \Phi) \text{ solves the  equation} \eqref{SWeq}\}/\G_{l+1,
{\alpha}}
\]
is compact.
\qed
\end{prop}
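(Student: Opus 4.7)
My plan is to reduce the statement to Kronheimer--Mrowka's unweighted compactness theorem for the Seiberg--Witten moduli space on $X^+$, and then upgrade the convergence from the unweighted $L^2_l$ topology to the weighted $L^2_{l,\delta}$ topology by exploiting the exponential decay of solutions on the almost K\"ahler cone. The constant $\delta_0$ will ultimately be the exponential decay rate of the linearization of the Seiberg--Witten map at $(A_0,\Phi_0)$ on the model cone.

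First I would recall the pointwise a priori bound $|\Phi^{(i)}|\leq |\Phi_0|$, obtained by applying the maximum principle to the Weitzenb\"ock identity for $D^+_A\Phi=0$ combined with the curvature equation. The key input is that the canonical configuration $(A_0,\Phi_0)$ saturates the Seiberg--Witten equation on the cone, so that $|\Phi_0|^2$ serves as a natural barrier. This bound together with the curvature equation gives a uniform bound on $F^+_{(A^{(i)})^t}$, and after passing to a Coulomb slice relative to $A_0$ one obtains uniform local $L^2_l$ control on $A^{(i)}-A_0$ over every compact piece of $X^+$. Standard elliptic bootstrapping, exactly as in \cite{KM97} Section 3(iii), then produces a sequence of (unweighted) gauge transformations under which a subsequence converges in the unweighted $L^2_l$ topology.

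Second, I would invoke the exponential decay estimate of \cite{KM97}: there exist $\delta_1>0$ and a constant $C$ independent of the solution such that, after Coulomb gauge fixing on the end, $|A-A_0|+|\Phi-\Phi_0|\leq Ce^{-\delta_1\sigma}$, together with analogous bounds on all covariant derivatives up to order $l$. The rate $\delta_1$ is determined by the indicial roots of the linearized operator $L$ on the model cylinder $\R^{\geq 1}\times\partial X$ and is strictly positive thanks to the nondegeneracy of $\Phi_0$ in the Weitzenb\"ock term. Setting $\delta_0=\delta_1$, for any $0<\delta<\delta_0$ the multiplications $e^{\delta\sigma}(A^{(i)}-A_0)$ and $e^{\delta\sigma}(\Phi^{(i)}-\Phi_0)$ are uniformly bounded in $L^2_l$. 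The dyadic-rescaling argument indicated in the remark after Lemma \ref{multiplication and compact embedding}, applied to the annuli $W_n=\sigma^{-1}([2^{n-1},2^n])$, then extracts from the already $L^2_l$-convergent subsequence a further subsequence converging in $L^2_{l,\delta}$.

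The main obstacle will be to ensure that the gauge transformations can in fact be chosen inside $\G_{l+1,\delta}$, rather than merely in a larger unweighted gauge group: the unweighted convergence from Kronheimer--Mrowka is not a priori enough to conclude $1-u^{(i)}\in L^2_{l+1,\delta}$. Here the global slice theorem (Proposition \ref{global slice}) is decisive. Given the exponential decay of $A^{(i)}-A_0$, applying the slice theorem produces a unique $u^{(i)}$ with $(u^{(i)})^{-1}\cdot (A^{(i)},\Phi^{(i)})-(A_0,\Phi_0)\in i\ker d^{*_{L^2_\delta}}\oplus L^2_{l,\delta}(S^+)$ and with $1-u^{(i)}\in L^2_{l+1,\delta}$, so the transformed configurations lie in $\Con_{l,\delta}$ and the gauge transformations lie in $\G_{l+1,\delta}$, closing the argument.
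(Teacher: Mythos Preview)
The paper does not supply its own proof of this proposition; it is quoted from \cite{KM97} Section~3(iii), with an additional pointer to \cite{MR06} Proposition~2.2.6 (which treats the weighted-Sobolev formulation directly), and the statement is closed with \qed. So there is no in-paper argument to compare against.

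Your outline is a faithful sketch of the argument found in those references: pointwise a~priori bounds via the Weitzenb\"ock identity and maximum principle, local elliptic bootstrapping in Coulomb gauge for unweighted compactness, and exponential decay on the cone end (governed by the indicial roots of the linearization at $(A_0,\Phi_0)$) to upgrade to $L^2_{l,\delta}$ convergence for $\delta$ below the decay rate. Your appeal to Proposition~\ref{global slice} to secure $u^{(i)}\in\G_{l+1,\delta}$ is legitimate and not circular, since the slice theorem is established earlier and independently of this compactness result. One minor point of logic: the solutions are assumed from the outset to lie in $\Con_{l,\delta}$, so the slice theorem already applies to them directly; the exponential decay is needed not to place them in the weighted space but to obtain a \emph{uniform} weighted bound on the sliced representatives, from which convergence of a subsequence follows. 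Your sketch blurs this order slightly, but the underlying argument is sound.
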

\begin{prop}\label{Step1}
For sufficiently small $\alpha>0$, 
There exists $R>0$ such that $(L+C)^{-1}(w_0) \subset \text{Int} B^{V}(0, R)$, where $B^{V}(0, R)$ is the ball of radius $R$ centered in $0$ in $V$.
\end{prop}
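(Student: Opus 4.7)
The plan is to identify $(L+C)^{-1}(w_0)$ with the Seiberg--Witten moduli space $\mathcal{M}_{l,\delta}$ via the global slice theorem, and then deduce boundedness from Kronheimer--Mrowka's compactness result stated immediately above.

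First, I would observe that the decomposition of the Seiberg--Witten map carried out just before the proposition shows that $(a,\phi)\in(L+C)^{-1}(w_0)$ if and only if the configuration $(A,\Phi):=(A_0+a,\Phi_0+\phi)$ solves the Seiberg--Witten equation \emph{and} satisfies the slice condition $d^{*_{L^2_\delta}}a=0$. Thus $(A_0,\Phi_0)+(L+C)^{-1}(w_0)$ is precisely the set of Seiberg--Witten solutions lying in the global slice $(A_0,\Phi_0)+(i\ker d^{*_{L^2_\delta}}\oplus L^2_{l,\delta}(S^+))$.

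Second, by Proposition \ref{global slice}, this slice is carried diffeomorphically---hence homeomorphically---onto $\Con_{l,\delta}/\G_{l+1,\delta}$ by the quotient map. Restricting to the gauge-invariant closed subset of solutions produces a homeomorphism
\[
(A_0,\Phi_0)+(L+C)^{-1}(w_0)\;\xrightarrow{\;\cong\;}\;\mathcal{M}_{l,\delta}.
\]
Since $\mathcal{M}_{l,\delta}$ is compact by the cited Proposition, $(L+C)^{-1}(w_0)$ is a compact subset of $V$, and is therefore norm-bounded. Choosing any $R$ strictly larger than the supremum of $\|\cdot\|_V$ on this set yields the claimed inclusion $(L+C)^{-1}(w_0)\subset\mathrm{Int}\,B^V(0,R)$.

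The only nontrivial point is ensuring that the bijection with $\mathcal{M}_{l,\delta}$ is genuinely a homeomorphism in the $V$-topology, but this is built into the diffeomorphism statement of Proposition \ref{global slice}. No fresh analytic work is needed here: the heavy lifting is already packaged inside the compactness theorem of Kronheimer--Mrowka and the global slice theorem.
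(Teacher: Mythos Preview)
Your proof is correct and follows exactly the same approach as the paper's own proof: identify $(L+C)^{-1}(w_0)$ with $\mathcal{M}_{l,\delta}$ via the global slice theorem (Proposition~\ref{global slice}) and then invoke Kronheimer--Mrowka's compactness result. The paper states this in a single sentence, whereas you have spelled out the details more carefully, but the argument is the same.
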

\begin{proof}
By the global slice theorem \ref{global slice}, $(L+C)^{-1}(w_0)$ is homeomorphic to the moduli space $\mathcal{M}_{l,
{\alpha}}$, so the conclusion follows from the proposition above.
\end{proof}
Before turning to the next step of the finite dimensional approximation, we will prepare
the following lemma which is used as a counterpart of the elliptic estimate.
\begin{lem}
Let $(V, \|\cdot \|_V)$, $(W, \|\cdot \|_W)$ be Banach spaces and $L: V \to W$ be a continuous linear Fredholm operator.
Let $n$ be a norm on $V$  which is weaker than $\|\cdot\|_V$).
Then, there exists a positive constant $C$ such that for any $v \in V$, 
\[
\|v\|_V \leq C(\|Lv\|_W+n(v))
\] 
holds.
\end{lem}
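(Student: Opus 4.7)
The plan is to exploit the Fredholm structure to split $V$ into a finite-dimensional piece where $n$ controls $\|\cdot\|_V$ and a complementary piece where $L$ is invertible. Since $L$ is Fredholm, $\ker L$ is finite-dimensional and $\operatorname{im} L$ is closed, so I can choose a closed complement $V_0 \subset V$ of $\ker L$, giving a topological direct sum decomposition $V = \ker L \oplus V_0$ with associated continuous projections $P : V \to \ker L$ and $Q = I - P : V \to V_0$.

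On the complementary summand $V_0$, the restriction $L|_{V_0} : V_0 \to \operatorname{im} L$ is a continuous linear bijection between Banach spaces (the target is closed in $W$, hence Banach), so the open mapping theorem gives a constant $C_1 > 0$ with $\|v_0\|_V \leq C_1 \|L v_0\|_W$ for every $v_0 \in V_0$. On the finite-dimensional summand $\ker L$, the restriction of $n$ is a norm (not merely a seminorm) and all norms on a finite-dimensional vector space are equivalent, so there is $C_2 > 0$ with $\|v_1\|_V \leq C_2\, n(v_1)$ for every $v_1 \in \ker L$.

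Combining these with the triangle inequality: for arbitrary $v \in V$, write $v = Pv + Qv$ and estimate
\[
\|v\|_V \leq \|Pv\|_V + \|Qv\|_V \leq C_2\, n(Pv) + C_1 \|L Q v\|_W = C_2\, n(Pv) + C_1 \|Lv\|_W,
\]
where the last equality uses $LP = 0$. To bound $n(Pv)$, I use that $n$ is dominated by $\|\cdot\|_V$ (a constant $C_3$ with $n \leq C_3 \|\cdot\|_V$) together with continuity of $Q$:
\[
n(Pv) \leq n(v) + n(Qv) \leq n(v) + C_3 \|Qv\|_V \leq n(v) + C_3 C_1 \|Lv\|_W.
\]
Substituting back yields $\|v\|_V \leq C_2\, n(v) + (C_1 + C_2 C_3 C_1)\|Lv\|_W$, which is the desired inequality after taking $C$ to be the maximum of the two coefficients.

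There is no real obstacle here; the only subtlety to watch is that a closed complement of $\ker L$ exists (immediate because $\ker L$ is finite-dimensional, so any algebraic complement is automatically closed) and that the associated projections are continuous, which follows from the same finite-dimensionality together with the closed graph theorem.
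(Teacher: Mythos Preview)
Your proof is correct and follows essentially the same approach as the paper: decompose $V$ as $\ker L$ plus a closed complement, use the open mapping theorem on the complement to control that piece by $\|Lv\|_W$, use equivalence of norms on the finite-dimensional $\ker L$ to control that piece by $n$, and combine via the triangle inequality. The paper merely packages the combination step as a separate abstract claim about a direct sum $X\oplus Y$ with $Y$ finite-dimensional, but the substance is identical.
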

\begin{proof}
It is enough to prove the following claim:
\begin{claim}
Let $(X, \|\cdot\|_X)$, $(Y, \|\cdot\|_Y)$ be norm spaces and suppose $Y$ is finite dimensional.
Consider two norms on $V=X\oplus Y$: one is the norm as direct sum $\|x\|_X+\|y\|_Y$, and the other is a norm $n$ such that there exists a positive constant $C_0$ such that for any $(x, y)\in V$, $n(x, y)\leq C_0(\|x\|_X+\|y\|_Y)$.
 Then, there exists a positive constant $C$ such that for any $(x, y) \in V$, 
 \[
 \|x\|_X+\|y\|_Y\leq C(\|x\|_X+n(x, y))
\]
holds.
\end{claim}
The correspondence between the lemma and claim is as follows.
$Y=\ker L$, $X$ is a complement of $Y$ in $X$, $W=X\oplus \operatorname{cok} L$, and $L: (x, y)\mapsto x$, where $x \in X, \, y \in Y$.
Here, we used the fact that the image of $ L $ is closed and $L: X\to  \im L$ is a continuous linear isomorphism by the open mapping theorem, so we may assume it is the identity map.

\par
\underline{Proof of Claim.}\,

Since $Y$ is finite dimensional, there exists a positive constant $c$ such that for any $y \in Y$, 
\[
c^{-1}\|y\|_Y\leq n(0, y)\leq c\|y\|_Y
\]
holds.
Thus, we have
\[
 \|x\|_X+\|y\|_Y\leq \|x\|_X+cn(0, y)\leq \|x\|_X+c(n(x, y)+n(x, 0))\leq (1+cC_0)\|x\|_X+cn(x, y)
\]
and this means that the claim holds for $C=\max\{1+cC_0, c\}$
\end{proof}
The following is the second step of the finite dimensional approximation.
\begin{prop}\label{Step2}
Let $R>0$ be the constant given by Proposition \ref{Step1}, there exists a sufficiently small constant $\epsilon >0$ such that $(L+C)(S^V(0, R))\cap B^W(w_0, \epsilon)=\emptyset$, where $S^V(0, R)$ is the sphere of radious $R$ centerd at $0$ at $V$ and $B^W(w_0, \epsilon)$ is the closed ball of radious $\epsilon$ centered at $w_0$ in $W$.
\end{prop}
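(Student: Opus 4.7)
The plan is a standard finite-dimensional-approximation compactness argument: argue by contradiction, use compactness of $C$ plus Fredholmness of $L$ (via the elliptic estimate lemma just proved) to extract a limit in $(L+C)^{-1}(w_0)\cap S^V(0,R)$, and then invoke Proposition \ref{Step1} to get a contradiction.

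More concretely, suppose no such $\epsilon$ exists. Then there is a sequence $v_n=(a_n,\phi_n)\in S^V(0,R)$ with $(L+C)(v_n)\to w_0$ in $W$. The first key step is to check that $C:V\to W$ is compact. From the explicit formula for $C$, each component is bilinear in $(a,\phi)$, so by the multiplication part of Lemma \ref{multiplication and compact embedding} the map $C$ factors as a bounded map $V\to L^2_{l,2\delta-\eta}(i\Lambda^+\oplus S^-\oplus 0)$ for any small $\eta>0$, followed by the inclusion into $W=L^2_{l-1,\delta}(\cdots)$. Choosing $\eta<\delta$ makes $2\delta-\eta>\delta$, so by the compact embedding part of Lemma \ref{multiplication and compact embedding} this inclusion is compact. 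Hence after passing to a subsequence $C(v_n)$ converges in $W$, and therefore so does $Lv_n=(L+C)(v_n)-C(v_n)$.

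Next I would apply the elliptic estimate lemma to the Fredholm operator $L:V\to W$ (Fredholm by Corollary \ref{Fredholm2}) with the weaker norm $n(v)=\|v\|_{L^2_{l-1,\delta'}}$ for some $\delta'<\delta$; again by Lemma \ref{multiplication and compact embedding} the inclusion $V=L^2_{l,\delta}\hookrightarrow L^2_{l-1,\delta'}$ is compact, so from boundedness of $\{v_n\}$ in $V$ we can extract a further subsequence that is Cauchy in this weaker norm. The elliptic estimate
\[
\|v_n-v_m\|_V\le C\bigl(\|L(v_n-v_m)\|_W+n(v_n-v_m)\bigr)
\]
then shows $\{v_n\}$ is Cauchy in $V$, hence converges to some $v_\infty\in V$. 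By continuity of $L+C$ and of the norm, $v_\infty\in S^V(0,R)$ and $(L+C)(v_\infty)=w_0$, contradicting Proposition \ref{Step1}.

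The only nontrivial step is the compactness of $C$: everything else is formal once one has the elliptic estimate and Fredholmness of $L$. This is where the introduction of the weighted Sobolev spaces pays off, since on the noncompact manifold $X^+$ the unweighted multiplication $L^2_l\times L^2_l\to L^2_l$ composed with inclusion $L^2_l\hookrightarrow L^2_{l-1}$ is not compact; the weighted version produces a genuine gain in the decay exponent from $\delta$ to $2\delta-\eta$, which turns the embedding into a compact one. I expect the proof to be very short once this observation is made explicit.
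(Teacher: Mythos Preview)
Your proposal is correct and follows essentially the same approach as the paper: contradiction, elliptic-estimate lemma applied to the Fredholm $L$, compactness coming from Lemma~\ref{multiplication and compact embedding}, and then a limit on the sphere contradicting Proposition~\ref{Step1}. The only cosmetic differences are that the paper chooses the unweighted $L^2$ norm as the weaker norm $n$ (rather than your $L^2_{l-1,\delta'}$) and folds the compactness of $C$ into a single appeal to Lemma~\ref{multiplication and compact embedding} rather than spelling out the factorization through $L^2_{l,2\delta-\eta}$; your version is simply more explicit on that point.
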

\begin{proof}
We suppose the contrary. Then we can pick a sequence $(v_n) \subset S^V(0, R)$ such that $(L+C)(v_n) \to w_0$ as $n\to \infty$.
Since $L$ is Fredholm, the above lemma implies that there exists a positive constant $C_0$ such that for any pair $(m, n)$, we have
\[
\|v_n-v_m\|_{L^2_{l,
{\alpha}}}\leq C_0(\|L(v_n-v_m)\|_{L^2_{l-1,
{\alpha}}}+\|v_n-v_m\|_{L^2}).
\]
Since $C$ is a compact operator, we can take a subsequence such that $C(v_n)$ converge, and hence $L(v_n)$ converge.
Thus,  the right-hand side converges to zero after passing to a subsequence by the compact embedding of Lemma \ref{multiplication and compact embedding}.
Thus, $v_n$ is Cauchy and convergent to some $v_\infty$ in $V$ in this subsequence.
Then, $v_\infty \in S^V(0, R)$ and $(L+C)(v_\infty)=w_0$.
This contradicts the condition on $R$.
\end{proof}
Now we fix a pair of sequences of finite dimensional subspaces 
\[
\ker L \subset V_0 \subset V_1 \subset \cdots \subset V
\]
\[
(\im L)^\perp \subset W_0 \subset W_1 \subset \cdots \subset W
\]
(where $(\im L)^\perp$ is the orthogonal complement of $\im L$ with respect to the inner product of $W$)
satisfying the following two conditions
\begin{enumerate}
\item
$V_n=L^{-1}(W_n)$.
\item
$\lim_{n\to \infty} P_n w=w$ for any $w \in W$, where $P_n: W\to W_n$ is the orthogonal projection with respect to the inner product of $W$.
\end{enumerate}
\begin{prop}\label{Step3}
 Let $R, \epsilon$ be the constants given by Proposition \ref{Step1}, Proposition \ref{Step2}.
Then, there exists a  natural number $N\in \Z^{\geq 0}$ such that for any $n \in \Z^{\geq N}$, $v \in S^V(0, R)$, we have $\|(1-P_n)(C(v)+w_0)\|_W<\epsilon$.
\end{prop}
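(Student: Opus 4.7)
The plan is to reduce the claim to two separate ingredients: compactness of the quadratic map $C$ on bounded sets, and uniform convergence of the projections $P_n$ to the identity on compact subsets of $W$. Once both are in hand, the conclusion is immediate by applying the second to the set $K=\overline{C(S^V(0,R))+w_0}$.

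First I would verify that $C\colon V\to W$ sends bounded sets to precompact sets. The components of $C$ are pointwise quadratic expressions in $(a,\phi)$ (namely $\rho^{-1}(\phi\phi^*)_0$ and $\rho(a)\phi$). By \Cref{multiplication and compact embedding}(1), applied with $k=l\ge 4>2$ and $\delta'=\delta$, for any sufficiently small $\epsilon'>0$ the pointwise multiplication defines a continuous map
\[
L^2_{l,\delta}\times L^2_{l,\delta}\to L^2_{l,\,2\delta-\epsilon'}.
\]
Choose $\epsilon'<\delta$; then \Cref{multiplication and compact embedding}(2) yields a compact inclusion $L^2_{l,\,2\delta-\epsilon'}\hookrightarrow L^2_{l-1,\delta}$. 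Thus $C$ factors through a compact embedding into $W$, so $C(S^V(0,R))$ is precompact. Translating by the fixed $w_0$, the set $K:=\overline{C(S^V(0,R))+w_0}$ is compact in $W$.

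Second I would prove that $\|(1-P_n)w\|_W\to 0$ uniformly for $w\in K$. Since $W=L^2_{l-1,\delta}$ is a Hilbert space and the $P_n$ are orthogonal projections, $\|P_n\|_{\mathrm{op}}\le 1$ and hence $\|1-P_n\|_{\mathrm{op}}\le 2$ uniformly in $n$. Given $\epsilon>0$ (as provided by \Cref{Step2}), cover $K$ by finitely many balls $B(w_i,\epsilon/5)$ using compactness. By the assumed pointwise convergence $P_n w_i\to w_i$, pick $N$ with $\|(1-P_n)w_i\|_W<\epsilon/5$ for all $n\ge N$ and all $i$. For arbitrary $w\in K$, choosing $w_i$ with $\|w-w_i\|_W<\epsilon/5$, one gets
\[
\|(1-P_n)w\|_W \le \|1-P_n\|_{\mathrm{op}}\,\|w-w_i\|_W + \|(1-P_n)w_i\|_W < \tfrac{2\epsilon}{5}+\tfrac{\epsilon}{5}<\epsilon
\]
for every $n\ge N$. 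Applying this to $w=C(v)+w_0$ with $v\in S^V(0,R)$ gives the proposition.

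The only non-routine step is the first one: one has to recognise that the compactness of $C$ is the combined output of the multiplication inequality in weighted Sobolev norms (which produces a small gain in the weight parameter, from $\delta$ to $2\delta-\epsilon'$) together with the compact weighted embedding (which requires strict inequality of weights, hence $\epsilon'<\delta$). This is precisely the reason weighted Sobolev spaces were introduced, and it is why the passage from the closed case to the cone-end case needs more care than a direct invocation of Rellich's theorem. Once compactness of $C$ is secured, the uniform-convergence-on-compact-sets argument for the projections is standard and presents no obstacle.
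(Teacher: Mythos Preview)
Your proof is correct and follows essentially the same strategy as the paper: both hinge on the compactness of $C$ (via \Cref{multiplication and compact embedding}) together with the pointwise convergence $P_n\to 1$. The only cosmetic difference is that the paper argues by contradiction---extracting a weakly convergent subsequence $v_n\rightharpoonup v_\infty$ and using that $C(v_n)\to C(v_\infty)$ strongly---while you package the same content as ``$K=\overline{C(S^V(0,R))+w_0}$ is compact, and $P_n\to 1$ uniformly on compacta''; your direct version is arguably cleaner and makes the role of the weight gain $\delta\mapsto 2\delta-\epsilon'$ more explicit.
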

\begin{proof}
We can ignore the term $w_0$ in the statement due to the condition (2).
We suppose the contrary.
Then we can pick a sequence $(v_n) \subset S^V(0, R)$ such that for any $n \in \Z^{\geq 0}$, $\|(1-P_n)C(v_n)\|_W\geq \epsilon$ holds.
Since $(v_n)$ is bounded, $v_n$ converges weakly to some $v_\infty$ in $V$ after passing to a subsequence.
Then, $C(v_n)$ converges strongly to $C(v_\infty)$ in $W$.
By the condition (2) for $\{V_n\}, \{W_n\}$, $(1-P_n)C(v_\infty)\to 0$ as $n\to \infty$.
Thus, there exists $N_0 \in \Z^{\geq 0}$ such that for any $n \in \Z^{\geq N_0}$, 
\[
\|(1-P_n)C(v_\infty)\|_W <\frac{\epsilon}{2}
\]
holds.
Since $P_{N_0}(C(v_n)-C(v_\infty))\to 0$ as $n \to \infty$, there exists $N \in \Z^{\geq N}$ such that
\[
\|P_{N_0}(C(v_n)-C(v_\infty))\|_W<\frac{\epsilon}{2}
\]
holds.
Thus we have
\[
\begin{split}
\|(1-P_N)C(v_N)\|_W&\leq \|(1-P_{N_0})C(v_N)\|_W\\
&\leq \|(1-P_{N_0})C(v_N)\|_W+\|P_{N_0}(C(v_N)-C(v_\infty))\|_W\\
&<\frac{\epsilon}{2}+\frac{\epsilon}{2}\\
&=\epsilon.
\end{split}
\]
This contradicts the hypothesis.
\end{proof}
The following proposition gives our Bauer--Furuta-type invariant:
\begin{prop}\label{construction}
For $N$ in Proposition \ref{Step3}, the following holds.
For any $n \in \Z^{\geq N}$, $v \in S^{V}(0, R)$ we have $(L+P_nC)(v)\neq P_nw_0$ and thus the map $L+P_nC: S^{V_n}(0, R) \to W_n\setminus \{P_nw_0\}$ is well-defined.
Its stable homotopy class $\Psi(X, \xi, \s)\in \pi^{st}_{d(\s)}(S^0)$ (up to sign) depends only on $X$, $\s \in \spinc(X, \xi)$, and the isotopy class of $\xi$.
Here, $d(\s)=\Index L$ is the same as the virtual dimension of the moduli space for  Kronheimer--Mrowka's invariant $\mathfrak{m}(X, \xi, \s)$.
\end{prop}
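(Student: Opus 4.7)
The plan is to verify in turn: the well-definedness of $L+P_nC$ on $S^{V_n}(0,R)$ with values in $W_n\setminus\{P_nw_0\}$, the passage to a stable homotopy class in $\pi^{st}_{d(\s)}(S^0)$, and the invariance under the remaining auxiliary choices.

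For the first claim, note that if $v\in V_n\cap S^V(0,R)$ then $L(v)\in W_n$ by the defining property $V_n=L^{-1}(W_n)$, whence
\[
(L+P_nC)(v)-P_nw_0 \;=\; P_n\bigl((L+C)(v)-w_0\bigr)
\]
and $(1-P_n)\bigl((L+C)(v)-w_0\bigr)=(1-P_n)(C(v)-w_0)$. By \cref{Step2} the norm $\|(L+C)(v)-w_0\|_W$ is bounded below by $\epsilon$ on $S^V(0,R)$. By \cref{Step3} together with condition (2) on $\{W_n\}$, which implies $(1-P_n)w_0\to 0$, for $n\geq N$ with $N$ chosen large enough the norm $\|(1-P_n)(C(v)-w_0)\|_W$ is strictly less than $\epsilon$, uniformly in $v$. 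Combining via the Pythagorean identity in $W=P_nW\oplus (1-P_n)W$ gives $\|P_n((L+C)(v)-w_0)\|_W>0$, so the map $L+P_nC: S^{V_n}(0,R)\to W_n\setminus\{P_nw_0\}$ is well-defined.

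Since $W_n\setminus\{P_nw_0\}$ deformation retracts onto $S^{\dim W_n-1}$, the map represents an element of $\pi_{\dim V_n-1}(S^{\dim W_n-1})$. Because $\dim V_n-\dim W_n=\Index L=d(\s)$, and enlarging $n$ to $n+1$ adds complementary summands $V_{n+1}\cap V_n^\perp$ and $W_{n+1}\cap W_n^\perp$ on which $L+P_{n+1}C$ restricts to the isomorphism $L$, the two representatives differ by a one-fold suspension. Hence the stable class $\Psi(X,\xi,\s)\in\pi^{st}_{d(\s)}(S^0)$ is well-defined once one fixes the identification of these complementary summands; this last choice is what introduces the sign ambiguity.

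For invariance under the remaining data I would run homotopy arguments. The auxiliary choices, namely $R$, $\epsilon$, $N$, the Sobolev index $l$, the weight $\delta\in(0,\delta_0)$, the smooth extensions over $X$ of the metric and of $(A_0,\Phi_0)$, and the exhausting subspace sequences $\{V_n\},\{W_n\}$ (compared by joint enlargement), all lie in connected, indeed contractible, parameter spaces. Along any path \cref{Fredholm2} and Kronheimer--Mrowka's compactness remain valid, so \cref{Step1,Step2,Step3} continue to apply and yield a homotopy through well-defined maps whose stable classes therefore coincide. Invariance under isotopy of $\xi$ follows from Gray's theorem: an isotopy of contact structures on $\partial X$ is realized by an ambient diffeomorphism, which extends to $X^+$ and pulls the whole construction back.

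The main obstacle is securing uniform constants $R,\epsilon,N$ along such homotopies, since the Kronheimer--Mrowka compactness bound and the Fredholm estimates are stated pointwise in the auxiliary data. A standard remedy is to cover the parameter interval $[0,1]$ by finitely many sub-intervals on each of which uniform constants exist, and to concatenate the resulting local homotopies; this is admissible because the parameter region $(0,\delta_0)$ and the space of admissible metric and spinor extensions are open. The sign ambiguity itself is intrinsic: in contrast with the $S^1$-equivariant setting of \cite{Fur01}\cite{BF04}, there is no canonical orientation of the index of $L$.
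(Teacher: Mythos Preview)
Your proposal is correct and follows the paper's approach, with two minor differences worth noting.

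First, the paper dispatches the well-definedness in one line via the triangle inequality,
\[
\|(L+P_nC)(v)-P_nw_0\|_W \;\geq\; \|(L+C)(v)-w_0\|_W - \|(1-P_n)(C(v)-w_0)\|_W \;>\; \epsilon-\epsilon \;=\;0,
\]
which is valid for \emph{every} $v\in S^V(0,R)$ as the Proposition actually asserts. Your Pythagorean argument, by contrast, relies on the identity $(L+P_nC)(v)-P_nw_0=P_n((L+C)(v)-w_0)$, which needs $Lv\in W_n$ and hence only covers $v\in V_n$. That is enough to make the map $S^{V_n}(0,R)\to W_n\setminus\{P_nw_0\}$ well-defined, but it is slightly weaker than the statement as written; the triangle-inequality version both matches the claim and is marginally simpler.

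Second, the paper says nothing beyond ``the proof of the independence is standard,'' whereas you have sketched the suspension comparison between successive $n$, the homotopy through the contractible space of auxiliary choices, Gray stability for the isotopy of $\xi$, and the finite-cover trick to secure uniform constants $R,\epsilon,N$ along a path. All of this is exactly what ``standard'' is meant to abbreviate here, and your outline is accurate.
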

\begin{proof}
For $v \in S^{V}(0, R)$, we have
\[
\|(L+P_nC)-P_nw_0\|_W\geq \|(L+C)(v)-w_0\|_W-\|(1-P_n)(C(v)+w_0)\|_W>\epsilon-\epsilon=0.
\]
Thus, $(L+P_nC)(v)\neq P_nw_0$.
In order to obtain a based map, one can take the cone of this map and collapse the boundary.
(In general, for a boundary preserving continuous map between disks
\[
f: (D^n, \partial D^n)\to (D^m, \partial D^m), 
\]
the map obtained by collapsing the boundaries
\[
D^n/\partial D^n\to D^m/ \partial D^m
\]
is homotopic to the suspension of the map obtained by restricting to the boundaries
\[
\partial D^n\to \partial D^m, 
\]
as one can show easily by constructing explicit homotopy.)
The proof of the independence is standard.
\end{proof}
\begin{rem}
Unlike the usual Bauer-Furuta invariant for closed 4-manifolds, this stable map is not $S^1$ equivariant because $1$ is the only constant gauge transformation in the present setting.
\end{rem}

\section{Properties}
\subsection{Recovery of Kronheimer--Mrowka's invariant}
The following theorem is contact-boundary version of the fact that the Bauer--Furuta invariant recovers the Seiberg--Witten invariant.
\begin{thm}\label{recover}
When $H^1(X, \partial X; \R)=0$ and $d(\s)=0$, the mapping degree of our invariant $\Psi(X, \xi, \s)$ equals Kronheimer--Mrowka's invariant $\mathfrak{m}(X, \xi, \s)$ up to sign. 
\end{thm}
\begin{proof}
Take $n$ as in Proposition \ref{construction}.
We compare three equations.
\begin{enumerate}
\item
$L+C: V\to W$
\item
$L+P_nC: V\to W$
\item
$L+P_nC: V_n \to W_n$
\end{enumerate}
For a suitable perturbation $\eta_0$ of (1), $(L+C)^{-1}(\eta_0)$ is cobordant to the moduli space for Kronheimer--Mrowka's invariant $\mathfrak{m}(X, \xi, \s)$ because of the global slice theorem \ref{global slice}.
Thus it is enough to prove the following claims.
\begin{claim}
\begin{enumerate}
\item
For $\eta_1 \in W_n$, the inverse images of (2) and (3) are the same.
\item
The regular value $\eta_1 \in W_n$ of (3) is also a regular value for (2).
\item
Take a perturbation $\eta_0$ of (1) by standard argument and take a regular value of (3) by (finite dimensional) Sard's theorem.
Consider one parameter family of equations $L+C_t=L+(1-t)C+tP_nC\, (t\in [0, 1])$ which linearly joining (1) to (2).
Then, there exists a path $\eta_\bullet: [0, 1] \to W$ from $\eta_0$ to $\eta_1$ such that 
\[
\bigcup_{t\in [0, 1]}(L+C_t)^{-1}(\eta_t) \subset [0, 1]\times V
\]
is an orientable cobordism from $(L+C)^{-1}(\eta_0)$ to $(L+P_nC)^{-1}(\eta_1)$.
\end{enumerate}
\end{claim}
\underline{Proof of Claim.}\,
Suppose $\eta_1 \in W_n$ and $(L+C)(v)=\eta_1$ for some $v \in V$. 
Then, 
\[
(1-P_n)Lv=Lv+P^2_nC(v)-P_n\eta_1=(L+P_nC)(v)-\eta_1=0.
\]
and thus $v\in V_n$. This proves the first claim.
\par  The second claim can be easily checked. The third claim follows from the standard argument.
\end{proof}
From the non-vanishing theorem of Kronheimer--Mrowka's invariant for weak symplectic filling \cite{KM97} Theorem1.1, the following statement follows.
\begin{cor}\label{nonvanishing}
Let $(X, \omega)$ be a weak symplectic filling of $(\partial X, \xi)$ with $H^1(X, \partial X; \R)=0$ (which includes all Stein fillings). 
Then for $\s_\omega\in \spinc(X, \xi)$ canonically determined by symplectic structure $\omega$, $\Psi(X, \xi, \s_\omega)$ is a generator of $\pi^{st}_0(S^0)\cong \Z$.
\qed
\end{cor}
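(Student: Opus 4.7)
The plan is to deduce the corollary directly by chaining two ingredients: the recovery theorem \ref{recover} above, and Kronheimer--Mrowka's non-vanishing theorem for weak symplectic fillings \cite{KM97}*{Theorem 1.1}. There is essentially no new analysis to do; the content is to check that the hypotheses of \cref{recover} are met and then read off the conclusion.

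First, I would verify that the virtual dimension vanishes in this situation. For the canonical $\spinc$ structure $\s_\omega$ determined by a symplectic form on a weak symplectic filling, the relative Euler class computation in \cite{KM97} gives $d(\s_\omega)=\langle e(S^+,\Phi_0),[X,\partial X]\rangle=0$, since $\Phi_0$ extends as a nowhere-vanishing section of $S^+$ over the almost K\"ahler filling (this is the standard setup in \cite{KM97}*{Section 2}). Together with the standing hypothesis $H^1(X,\partial X;\R)=0$, this places us exactly in the regime where \cref{recover} applies, so $\Psi(X,\xi,\s_\omega)\in\pi^{st}_0(S^0)\cong\Z$ equals $\mathfrak{m}(X,\xi,\s_\omega)$ (up to sign) under the mapping-degree isomorphism.

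Next, I would invoke \cite{KM97}*{Theorem 1.1}, which asserts that for any weak symplectic filling $(X,\omega)$ of $(\partial X,\xi)$ the Kronheimer--Mrowka invariant satisfies $\mathfrak{m}(X,\xi,\s_\omega)=\pm 1$. Combining this with the previous identification immediately gives $\Psi(X,\xi,\s_\omega)=\pm 1\in\pi^{st}_0(S^0)\cong\Z$, i.e.\ it is a generator, as required. The parenthetical remark about Stein fillings is automatic since every Stein filling is in particular a weak symplectic filling.

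I do not expect any genuine obstacle here; the substantive work has already been done in proving \cref{recover} (which required the global slice theorem and the Fredholmness of $L$) and in Kronheimer--Mrowka's original non-vanishing argument. The only point that deserves care is the sign ambiguity: our invariant $\Psi(X,\xi,\s)$ is defined only up to sign, so the statement ``generator of $\Z$'' is the most one can assert, and this matches precisely with $\mathfrak{m}(X,\xi,\s_\omega)=\pm 1$.
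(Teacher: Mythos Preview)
Your proposal is correct and follows essentially the same approach as the paper: the paper simply states that the corollary follows from Kronheimer--Mrowka's non-vanishing theorem \cite{KM97}*{Theorem~1.1} combined with \cref{recover}, and your write-up spells out precisely this chain of implications, including the verification that $d(\s_\omega)=0$ which the paper takes for granted.
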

\subsection{Connected sum formula}
For a closed 4-manifold $X$ with $b_1(X)=0$ equipped with a $\spinc$ structure $\s$, we denote its 
Bauer--Furuta invariant by 
\[
\tilde{\Psi}(X, \s) \in \pi^{st}_{d(\s)+1}(S^0)
\]
forgetting the $S^1$ action, where
\[
d(\s)=\frac{1}{4}(c^2_1(\s)-2\chi(X)-3\sigma(X)).
\]
We shall prove the following connected sum formula.
\begin{thm}\label{connected sum formula}
Let $(X_1, \s_1)$ be a closed oriented connected 4-manifold with $b_1(X_1)=0$ equipped with a $\spinc$ structure and let $(X_2, \xi, \s_2)$ be a compact oriented connected 4-manifold  with contact boundary equipped with $\s_2 \in \spinc(X_1, \xi)$. In addition, suppose $H^1(X_1, \partial X_1; \R)=0$ in order to define our invariant of  $(X_2, \xi, \s_2)$. Then, we have
\[
\Psi(X_1\#X_2, \xi , \s_1\#\s_1)=\tilde{\Psi}(X_1, \s_1)\wedge \Psi(X_2, \xi, \s_2)\in \pi^{st}_{d(\s_1)+d(\s_2)+1}(S^0).
\]
\end{thm}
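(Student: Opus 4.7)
The plan is to imitate Bauer's neck-stretching proof of the closed connected sum formula, adapting each step to the contact-boundary setting and keeping careful track of the gauge-group identifications.

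First I would set up the geometry. Form the connected sum by removing a small open ball from the interior of $X_1$ and from the interior of $X_2$ (well away from $\partial X_2$) and gluing in a long cylindrical neck $[-T,T]\times S^3$; call the resulting manifold $X_T$, and let $X_T^+$ be obtained by attaching the almost K\"ahler cone to $\partial X_2=\partial X_T$. The condition $H^1(X_2,\partial X_2;\R)=0$ and $b_1(X_1)=0$ passes to $X_T$, so the global slice theorem \ref{global slice} and \cref{Fredholm2} apply. The $\spinc$ structure $\s_1\#\s_2$ is formed in the obvious way, and I would fix a base pair $(A_0,\Phi_0)$ on $X_T^+$ which restricts to the chosen base configurations on each side (on $X_1$, any smooth representative; on $X_2^+$, the extension used in Section~2).

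Next I would carry out neck decoupling. On the neck $[-T,T]\times S^3$ the reducible configuration $(A_0,0)$ is close to translation-invariant, and for $T$ large enough one has a cut-off/splicing identification, compatible with the global-slice gauge fixing $d^{*_{L^2_\delta}}$ on $X_T^+$ and the usual Coulomb gauge on $X_1$, between bounded configurations on $X_T^+$ and pairs of bounded configurations on the two pieces (one with a cylindrical end, the other with a K\"ahler end). Under this identification the linear Fredholm operator $L_T$ on $X_T^+$ differs from the direct sum $L_1\oplus L_2$ by a compact operator whose norm tends to $0$ as $T\to\infty$ (because the model operator on the cylinder is invertible away from the zero section, which is reducible), and the quadratic map $C_T$ splits strictly as $C_1\oplus C_2$ since $C$ is pointwise. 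Combining this with the uniform compactness bound of \cref{Step1}, I would conclude that for $T$ sufficiently large every solution on $X_T^+$ lies in a small neighborhood of a pair of solutions, one on each piece, in a uniform ball of radius $R=R_1+R_2$.

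Then I would do the finite-dimensional approximation compatibly. Choose increasing families $V_n^{(1)}\subset V_1$, $W_n^{(1)}\subset W_1$ for $X_1$ and $V_n^{(2)}\subset V_2$, $W_n^{(2)}\subset W_2$ for $X_2^+$ satisfying the conditions before \cref{Step3}. The splicing identification sends $V_n^{(1)}\oplus V_n^{(2)}$ into some $V_{N}(X_T)$ and likewise for $W$, and for $T$ and $N$ large enough, \cref{Step2,Step3} applied simultaneously to all three manifolds yield that $L_T+P_N C_T$ is homotopic, as a map of spheres into punctured linear spaces, to $(L_1+P_n C_1)\oplus (L_2+P_n C_2)$. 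Taking one-point compactifications turns this direct sum into the smash product of the corresponding pointed maps. After quotienting by the gauge group, the invariant on $X_1$ is naturally $S^1$-equivariant and the forgetful map to non-equivariant stable homotopy suspends once, accounting precisely for the degree shift by $1$ in the target $\pi^{st}_{d(\s_1)+d(\s_2)+1}(S^0)$; on the $X_2$ side and on $X_T^+$ the constant gauge group is trivial, so no extra suspension appears. This identifies the stable class of $L_T+P_N C_T$ with $\tilde{\Psi}(X_1,\s_1)\wedge\Psi(X_2,\xi,\s_2)$.

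The main obstacle will be the third step, in particular reconciling the two different gauge-fixing conditions used on the closed piece and on the contact-boundary piece, and showing that the spliced $d^{*_{L^2_\delta}}$-slice on $X_T^+$ is, up to homotopy of the linear Fredholm part, a product of the individual slices. The key analytic input is a uniform lower bound on the model cylindrical operator $d^{*_{L^2_\delta}}+d^++D_{A_0}^+$ on $[-T,T]\times S^3$ with respect to the reducible $(A_0,0)$, from which the compact error term above is controlled; once this is established, the smash product identification and the independence of the stable class from $T$, $N$, and the splicing cutoff follow by the same standard homotopy arguments used in the proof of \cref{construction}.
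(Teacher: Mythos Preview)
Your outline follows a direct neck-stretching/gluing strategy: stretch the neck on a single manifold $X_T^+$, compare the Fredholm package on $X_T^+$ with the direct sum of packages on the two pieces with cylindrical ends, and identify the finite-dimensional approximations. This is \emph{not} the route the paper takes. The paper deduces the theorem from a more general statement (Theorem~\ref{generalized connected sum}) which is proved by Bauer's permutation trick: one works on a \emph{disjoint union} of $n$ manifolds, each obtained by gluing a pair $X_{i,-}\cup_{S^3}X_{i,+}$ along a long neck, and shows that permuting the $+$-halves by an even permutation $\tau$ does not change the stable class of the Seiberg--Witten map. The identification $V(X(L))\cong V(X^\tau(L))$ is implemented by a cutoff on the necks and a path in $SO(n)$, and the homotopy from $\mu$ to $\mu^\tau$ is through genuine Seiberg--Witten-type maps on a \emph{fixed} function space. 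The only analytic work beyond Bauer's original argument is a uniform $L^2_{l,\delta}$ bound on solutions through the homotopy on the component carrying the almost K\"ahler end, and for this the paper uses the global slice decomposition (Lemma~\ref{decomposition}) to pass from a bound in an arbitrary gauge to a bound in the $d^{*_{L^2_\delta}}$-gauge.

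The advantage of the paper's approach is that it never leaves the category of manifolds with compact (or cone) ends: there are no cylindrical-end pieces, no splicing of configurations between topologically different manifolds, and no need to compare slices on $X_T^+$ with slices on $X_1$ and $X_2^+$ separately. Your approach, by contrast, requires you to set up a comparison between $V(X_T^+)$ and function spaces on $X_1\setminus D^4$ and $X_2^+\setminus D^4$ with infinite cylindrical ends attached; the assertion that $L_T$ differs from $L_1\oplus L_2$ by a small compact operator and that $C_T$ splits strictly both presuppose such an identification, and making it precise (with uniform control in $T$) is substantial work that you have not supplied. The slice-reconciliation problem you flag is real, and Bauer's trick sidesteps it entirely. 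Your strategy is closer in spirit to a relative Bauer--Furuta gluing theorem; it is plausible that it can be made to work, but as written it is a sketch with a genuine gap at the splicing step, and it is considerably more laborious than the argument the paper actually gives.
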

\begin{proof}
This follows from a more general result Theorem \ref{generalized connected sum}, which we will prove later. 
\end{proof}

Let $n$ be a natural number (the result will be trivial unless $n\geq  3$) and 
$\tau: \{1, \dots, n\}\to \{1, \dots, n\}$ be an even permutation.
Let $X_{1, -}$ and $X_{i, \pm} (i=2, \dots, n)$ be compact oriented connected 4-manifolds with $b_1=0$ equipped with $\spinc$ structures each of which has $S^3$ as the only component of the boundary.
Let $\hat{X}$ be a compact oriented connected 4-manifold such that $\partial \hat{X}$ supports a contact structure, and $H^1(\hat{X}, \partial\hat{X}; \R)=0$.
Let $X_{1, +}$ be a noncompact manifold with $\partial X_{1, +}=S^3$ obtained from $\hat{X}$ by attaching the almost K\"ahler cone and removing a small 4-ball.
For $L\geq 0$, consider
\[
X_i(L)=X_{i, -}\cup_{S^3} ([-L, L]\times S^3)\cup_{S^3} X_{i, +}
\]
\[
X^\tau_i(L)=X_{i, -}\cup_{S^3} ([-L, L]\times S^3)\cup_{S^3} X_{\tau(i), +}.
\]
with the product metric of the round metric of $S^3$ and the standard metric on $[-L, L]$ on each neck $[-L, L]\times S^3$.
We denote their disjoint union by
\[
X(L)=\coprod^n_{i=1}X_i(L)
\]
and
\[
X^\tau(L)=\coprod^n_{i=1}X^\tau_i(L).
\]

As in the construction of our Bauer-Furuta-type invariant in the previous section, pick a based configuration $(A_0, \Phi_0)$ on $X_{1, +}$ and fix a smooth extension to $X(L)$ and $X^\tau(L)$ such that $A_0$ is flat and $\Phi_0$ is zero on each neck $[-L, L]\times S^3$ (In particular the Seiberg--Witten equation is the same as the usual one for closed manifolds on each neck).
Pick a smooth extension $\sigma$ of the $\R^{\geq 1}$ coordinate on the almost K\"ahler cone to $X_{1, +}$ such that its support is contained in the interior and extend it to $X(L), X^\tau(L)$ as zero outside $X_{1, +}$.
For $l \in \Z^{\geq 4}$ and $\alpha>0$, we define   $V(X(L))$ to be $L^2_{l,
{\alpha}}(i\Lambda^1\oplus S^+)$ and 
$W(X(L))$ to be the $L^2_\alpha$ orthogonal complement  of the $n-1$ dimensional space consists of constant $0$-forms on $X_i(L)\, (i=2, \dots, n)$ in $L^2_{l-1,
{\alpha}}(i\Lambda^0\oplus i\Lambda^+ \oplus S^-)$.
Define $V(X^\tau(L)), W(X^\tau(L))$ similarly.
As in the previous section, we denote the Seiberg--Witten map with the global slice by
\[
\mu: =L+C: V(X(L)) \to W(X(L))
\]
\[
\mu^\tau=L^\tau+C^\tau: V(X^\tau(L)) \to W(X^\tau(L)).
\]
For each $L\geq 0$, the finite dimensional approximation of $\mu$ gives 
\[
[\mu]=\Psi(X_{1, -}\cup_{S^3}X_{1, +})\wedge \tilde{\Psi}(X_{2, -}\cup_{S^3}X_{2, +})\wedge \cdots \wedge \tilde{\Psi}(X_{n, -}\cup_{S^3}X_{n, +}) \in \pi^{st}_{d(\s_1)+\dots + d(\s_n)+n-1}(S^0).
\]
The case for $\mu^\tau$ is similar.
As in \cite{Bau04}, by fixing a function $\psi: [-1, 1] \to [0, 1]$ which gives a cut off function on the necks and a path $\gamma$ in $SO(n)$ from the $1_{SO(n)}$ and $\tau$, an identifications
\[
V(X(L)) \xrightarrow{\cong} V(X^\tau(L)), \quad W(X(L))\xrightarrow{\cong} W(X^\tau(L))
\]
are determined.
The following is the generalized statement of the connected sum formula.
\begin{thm}\label{generalized connected sum}
In the above setting, we have
\[
[\mu]=[\mu^\tau] \in \pi^{st}_{d(\s_1)+\cdots+d(\s_n)+n-1}(S^0)
 \]
under the identification above.
(In particular,  Theorem \ref{connected sum formula} follows by considering the permutation between
\[
(X_1\# X_2)\amalg (S^4\# S^4)\amalg (S^4\# S^4)
\]
and
\[
(X_1 \# S^4)\amalg (S^4\# X_2)\amalg (S^4\# S^4).
\]
and Proposition2.3 of \cite{Bau04}
)
\end{thm}
\begin{proof}
This can be shown by the similar method as Bauer's original proof of Theorem 2.1 of \cite{Bau04}.
We will explain the main steps following \cite{Bau04}.
For
$1\leq R\leq L$, let $\beta_R$ be a cut off function $X(L)\to [0, 1]$ with  $ \beta_R\equiv 0$ on $X(L)\setminus ([-R+1, R-1]\times S^3)$,  $\beta_R\equiv 1$ on $[-R, R]\times S^3$, and   depends only on $[-L, L]$-coordinate. 
Set
$\beta_{t, R}=(1-t)+t\beta_{R}$.
Consider the following three types of deformations.
\begin{enumerate}
\item
\[
\mu^{(1)}_t=L+C^{(1)}_t, \quad t \in [0, 1]
\]
where
\[
C^{(1)}_t
\begin{bmatrix}
a\\
\phi
\end{bmatrix}
=\begin{bmatrix}
-\beta_{L, t}\rho^{-1}(\phi\phi^*)_0\\
\rho(a)\phi\\
0
\end{bmatrix}
\]

\item
\[
\mu^{(2)}_t=L^{(2)}_t+C^{(1)}_1, \quad t \in [0, 1]
\]
where
\[
L^{(2)}_t
\begin{bmatrix}
a\\
\phi
\end{bmatrix}
=\begin{bmatrix}
d^+a-\rho^{-1}(\Phi_0 \phi^*+\phi\Phi^*_0)_0\\
D^+_{A_0}\phi+\rho(\beta_{2, t}a)\Phi_0\\
d^{*, \alpha}a
\end{bmatrix}
\]
\item
\[
\mu^{(3)}_t=\mu^{\tau (2)}_1+ d\log (V_t)
\]
where, $V_t$ is
\[
(a, \phi)\mapsto (\psi \circ t\gamma)(a, \phi)
\]
on the neck and extended in the obvious way on  $X(T)$.
\end{enumerate}
The following statements correspond to Lemma 3.2, 3.3, 3.4, 3.5 of \cite{Bau04} and can be shown in a similar way.
\begin{claim}
\begin{enumerate}
\item
$(\mu^{(1)}_t)^{-1}(w_0)$ is bounded for all $t \in [0, 1]$.
\item
There exist  constants $L_1$, $R_*$ such that if $L\geq L_1$,  the following holds on $X(L)$.
Any $(a, \phi) \in (\mu^{(1)}_1)^{-1}(w_0)$ satisfies $\|(a, \phi)\|_{L^2_{l, \alpha}}\leq  R_* $.
\item
There exist $L_1$, such that if $L\geq L_1$, the following holds on $X(L)$.
For any $(a, \phi) \in \cup_{t \in [0, 1]}(\mu^{(2)}_t)^{-1}(w_0)$ satisfying $\|(a, \phi)\|^2_{L^2_{l, \alpha}}\leq 2R_* $ in fact satisfies $\|(a, \phi)\|_{L^2_{l, \alpha}}\leq R_* $.
\item
There exist $L_2\geq L_1$ such that if $L\geq L_2$, then the following holds on $X(L)$.
For any $(a, \phi) \in \cup_{t \in [0, 1]}(\mu^{(3)}_t)^{-1}(w_0)$ satisfying $\|(a, \phi)\|^2_{L^2_{l, \alpha}}\leq 2R_* $ in fact satisfies $\|(a, \phi)\|_{L^2_{l, \alpha}}\leq R_* $.
\end{enumerate}
\end{claim}
By composing  homotopoies obtained from these deformations on $X(L)$ and corresponding homotopies on $X^\tau(L)$, the conclusion follows.
\end{proof}
\subsection{Invariance under the connected sum of rational homology spheres}
Combining the connected sum formula Theorem \ref{connected sum formula} and \cite{Bau04} Proposition 2.3,
we see that our invariant is invariant under the connected sum of rational homology spheres.
\begin{thm}\label{QHS4}
Let $(X, \xi)$ be a compact oriented connected 4-manifold with contact boundary satisfying $H^1(X, \partial X; \R)=0$ and let $M$ be a rational homology $4$ sphere.
Then, for any $\spinc$ structure $\s_0$ on $M$ and for $\s_1\in \spinc(X, \xi)$, we have
\[
\Psi(M\# X, \xi, \s_0\#\s_1)=\Psi(X, \xi, \s_1).
\] 
\end{thm}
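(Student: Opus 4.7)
The plan is to deduce the statement directly from the connected sum formula (Theorem \ref{connected sum formula}) combined with Bauer's computation of the stable cohomotopy Seiberg--Witten invariant for rational homology 4-spheres (\cite{Bau04} Proposition 2.3).

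First, I would check that $\Psi(M \# X, \xi, \s_0 \# \s_1)$ is even defined. Since $M$ is closed, $\partial(M \# X) = \partial X$, and a Mayer--Vietoris argument applied to the decomposition $M \# X = (M \setminus B^4) \cup_{S^3} (X \setminus B^4)$ reduces $H^1(M \# X, \partial X; \R) = 0$ to $H^1(M; \R) = H^1(X, \partial X; \R) = 0$. As $b_1(M) = 0$ also holds, the hypotheses of Theorem \ref{connected sum formula} are satisfied, so
\[
\Psi(M \# X, \xi, \s_0 \# \s_1) = \tilde{\Psi}(M, \s_0) \wedge \Psi(X, \xi, \s_1) \in \pi^{st}_{d(\s_0)+d(\s_1)+1}(S^0).
\]

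Next, I would compute the relevant dimensions. For a rational homology 4-sphere, $\chi(M) = 2$, $\sigma(M) = 0$, and $c_1(\s_0)^2 = 0$ (since $H^2(M; \R) = 0$), so $d(\s_0) = -1$ and the target simplifies to $\pi^{st}_{d(\s_1)}(S^0)$, matching the target of $\Psi(X, \xi, \s_1)$. In particular, $\tilde{\Psi}(M, \s_0)$ lives in $\pi^{st}_0(S^0) \cong \Z$.

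The key input is then \cite{Bau04} Proposition 2.3, which asserts that $\tilde{\Psi}(M, \s_0) \in \pi^{st}_0(S^0)$ is a unit (that is, $\pm 1$) for every $\spinc$ structure on a rational homology 4-sphere $M$. Since smash product with $\pm 1 \in \pi^{st}_0(S^0)$ is the stable identity up to sign, we conclude $\Psi(M \# X, \xi, \s_0 \# \s_1) = \pm \Psi(X, \xi, \s_1)$, which is the desired equality because our invariant is only defined up to sign. The only minor subtlety is identifying the $\spinc$ structure $\s_0 \# \s_1$ on the connected sum with the pair $(\s_0, \s_1)$ appearing in Theorem \ref{connected sum formula}, which is just the naturality of the $\spinc$ connected sum construction along $S^3$. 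I expect no real obstacle here, since both the connected sum formula and Bauer's computation are already in hand.
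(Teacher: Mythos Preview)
Your proposal is correct and follows exactly the paper's own argument: apply the connected sum formula (Theorem \ref{connected sum formula}) and then invoke \cite{Bau04} Proposition 2.3 to identify $\tilde{\Psi}(M,\s_0)$ as a generator of $\pi^{st}_0(S^0)$. The additional checks you supply (that $H^1(M\#X,\partial X;\R)=0$, that $d(\s_0)=-1$, and that the sign ambiguity is harmless) are details the paper leaves implicit, so your write-up is simply a more fleshed-out version of the same proof.
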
 
\begin{proof}
By \cite{Bau04} Proposition 2.3, $\tilde{\Psi}(M, \s_0) \in \pi^{st}_0(S^0)$ is a generator, so the conclusion follows from the connected sum formula.
\end{proof}
The following corollary is given in \cite{Ger19} Proposition 5.3. 
\begin{cor}(\cite{Ger19})\,
Let $X$ be an oriented compact connected contractible $4$-manifold with $\partial X=S^3$. Let $\xi_{std}$ be the standard contact structure on $S^3$.
Then, for the unique element $\s\in \spinc(X,\xi_{std})$, we have
\[
\mathfrak{m}(X, \xi_{std}, \s)=\pm1.
\]
\end{cor}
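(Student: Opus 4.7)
The plan is to reduce the statement to the known case of the standard Stein filling $D^4$, by invoking the connect-sum invariance (Theorem~\ref{QHS4}) together with the non-vanishing result for Stein fillings (Corollary~\ref{nonvanishing}) and the recovery of $\mathfrak{m}$ from $\Psi$ (Theorem~\ref{recover}).

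First I check that $X$ meets the hypotheses of our construction. Since $X$ is contractible, Poincar\'e--Lefschetz duality gives
\[
H^1(X,\partial X;\R)\cong H_3(X;\R)=0,
\]
so $\Psi(X,\xi_{std},\s)$ is defined. Contractibility also forces $H^2(X;\Z)=0=H^2(X,\partial X;\Z)$, confirming that $\spinc(X,\xi_{std})$ consists of a single element $\s$.

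Next I realize $X$ as a connected sum of a rational homology $4$-sphere with $D^4$. Let $\widehat X:=X\cup_{S^3}D^4$. A Mayer--Vietoris argument, using that $X$ is contractible, shows that $\widehat X$ is a homotopy $4$-sphere, in particular a rational homology $4$-sphere. If the interior ball used to form the connected sum $\widehat X\#D^4$ is chosen to be the very copy of $D^4$ glued in to build $\widehat X$, then one sees directly that $X\cong \widehat X\#D^4$ as compact oriented $4$-manifolds with contact boundary $(S^3,\xi_{std})$: removing the interior of the glued-in $D^4$ from $\widehat X$ recovers $X$, and the subsequent gluing of a second $D^4$ minus an open ball only attaches a collar.

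Finally I chain together the cited results. Theorem~\ref{QHS4} applied with $M=\widehat X$ gives
\[
\Psi(X,\xi_{std},\s)=\Psi(\widehat X\#D^4,\xi_{std},\widehat\s\#\s_{D^4})=\Psi(D^4,\xi_{std},\s_{D^4}),
\]
where $\widehat\s$ and $\s_{D^4}$ are the unique $\spinc$ structures on $\widehat X$ and in $\spinc(D^4,\xi_{std})$. Since $D^4$ is a Stein filling of $(S^3,\xi_{std})$, Corollary~\ref{nonvanishing} shows that $\Psi(D^4,\xi_{std},\s_{D^4})$ is a generator of $\pi^{st}_0(S^0)\cong\Z$; matching degrees also forces $d(\s)=0$, so Theorem~\ref{recover} identifies the mapping degree of $\Psi(X,\xi_{std},\s)$ with $\mathfrak{m}(X,\xi_{std},\s)$ and yields $\mathfrak{m}(X,\xi_{std},\s)=\pm 1$.

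The only delicate point is the topological identification $X\cong \widehat X\#D^4$: one must match the $S^3$ along which the connected sum is taken with the $S^3$ along which $\widehat X$ was assembled. All the analytic and invariance machinery has been developed in the theorems cited, so no new estimates are required.
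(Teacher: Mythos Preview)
Your proof is correct and follows essentially the same route as the paper: form the homotopy $4$-sphere $\widehat X = X\cup_{S^3}D^4$, apply Theorem~\ref{QHS4} to identify $\Psi(X,\xi_{std},\s)$ with $\Psi(D^4,\xi_{std},\s_{D^4})$, and then invoke Theorem~\ref{recover}. You have simply made explicit what the paper leaves implicit, in particular the appeal to Corollary~\ref{nonvanishing} for $D^4$ and the identification $X\cong \widehat X\# D^4$.
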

\begin{proof}
$M=X\cup_{S^3} D^4$ is a homotopy $S^4$.
So, Theorem \ref{recover} and the above theorem implies the conclusion.
\end{proof}
\begin{rem}
It is well-known that the smooth 4-dimensional Poincar\'e conjecture is equivalent to the statement that any contractible compact oriented 4-manifold whose boundary is $S^3$ has a structure of a Stein filling of $(S^3, \xi_{std})$.
The result above means that it is impossible to disprove this conjecture by denying the existence of Stein structures on such manifolds via Kronheimer--Mrowka's invariant.
\end{rem}
\subsection{Examples}
Using the connected sum formula and \cite{Bau04} Propsition 4.4, we can construct examples of $(X, \xi, \s)$ with $\mathfrak{m}(X, \xi, \s)=0$ but $\Psi(X, \xi, \s)\neq0$.
\begin{thm} 
Suppose closed oriented 4-manifolds with $\spinc$ structures $(X_i, \s_i)\, (i=1, 2)$ both have the following property:
\begin{enumerate}
\item
$X_i$ has an almost complex structure and $\s_i$ is the $\spinc$ structure determined by it.
\item
$b_1(X_i)=0$
\item
$b^+(X_i)\equiv 3\, (\mathrm{mod}\, 4)$
\item
The Seiberg--Witten invariant $\mathfrak{m}(X_i, \s_i)$ is an odd integer.
\end{enumerate}
Let $(X_3, \omega)$ be a weak symplectic filling of $(\partial X_3, \xi)$ with $H^1(X_3, \partial X_3; \R)=0$ and take $\s_3=\s_{\omega} \in \spinc(X_3, \xi)$.
Then, 
\[
\Psi(X_1\#X_3, \xi, \s_1\#\s_3)\in \pi^{st}_1(S^0)\cong \Z/2
\]
and 
\[
\Psi(X_1\#X_2\#X_3, \xi, \s_1\#\s_2\#\s_3)\in \pi^{st}_2(S^0)\cong \Z/2
\]
are nontrivial. 
\end{thm}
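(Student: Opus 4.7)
The plan is to reduce both nonvanishing statements to a smash-product computation of stable Hopf elements via the connected sum formula of Theorem \ref{connected sum formula}, iterated once for the pair case and twice for the triple case.

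First I would compute the virtual dimensions. Since each $X_i$ ($i=1,2$) carries an almost complex structure and $\s_i$ is the induced $\spinc$ structure, the standard identity $c_1(\s_i)^2 = 2\chi(X_i) + 3\sigma(X_i)$ gives $d(\s_i)=0$, so $\tilde{\Psi}(X_i,\s_i)\in \pi^{st}_1(S^0)\cong \Z/2$. By Corollary \ref{nonvanishing}, $\Psi(X_3,\xi,\s_3)\in \pi^{st}_0(S^0)\cong \Z$ is a generator, so in particular $d(\s_3)=0$. Applying Theorem \ref{connected sum formula} once yields
\[
\Psi(X_1\#X_3,\xi,\s_1\#\s_3)=\tilde{\Psi}(X_1,\s_1)\wedge \Psi(X_3,\xi,\s_3)\in \pi^{st}_1(S^0),
\]
and applying it twice (pairing the closed manifolds first, then smashing with the filling) gives
\[
\Psi(X_1\#X_2\#X_3,\xi,\s_1\#\s_2\#\s_3)=\tilde{\Psi}(X_1,\s_1)\wedge \tilde{\Psi}(X_2,\s_2)\wedge \Psi(X_3,\xi,\s_3)\in \pi^{st}_2(S^0).
\]

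Next I would invoke Bauer's mod $2$ calculation, \cite{Bau04} Proposition 4.4: under the hypotheses $b_1(X_i)=0$, $b^+(X_i)\equiv 3\pmod 4$, and $\mathfrak{m}(X_i,\s_i)$ odd, the (non-$S^1$-equivariant) Bauer--Furuta invariant $\tilde{\Psi}(X_i,\s_i)\in \pi^{st}_1(S^0)\cong \Z/2$ equals the Hopf element $\eta$, and in particular is nontrivial. Since smashing with a generator of $\pi^{st}_0(S^0)\cong \Z$ is an isomorphism on stable homotopy groups, this immediately gives $\Psi(X_1\#X_3,\xi,\s_1\#\s_3)=\eta\neq 0$ in $\pi^{st}_1(S^0)\cong \Z/2$. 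For the triple sum, the same reduction identifies the invariant with $\eta\wedge\eta=\eta^2$, and the classical fact $\eta^2\neq 0$ in $\pi^{st}_2(S^0)\cong \Z/2$ finishes the argument.

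The main point requiring care, and the only real obstacle, is aligning conventions between our setting, where the $S^1$-action has been forgotten throughout because $1$ is the only constant gauge transformation, and the form of Bauer's Proposition 4.4 in \cite{Bau04}, which is phrased for the $S^1$-equivariant stable cohomotopy invariant; one must check that forgetting the $S^1$-action in Bauer's computation still produces $\eta$. Once this bookkeeping is dispatched, the theorem is a formal consequence of Theorem \ref{connected sum formula}, Corollary \ref{nonvanishing}, and the two identities $\eta\neq 0$, $\eta^2\neq 0$ in the stable stems.
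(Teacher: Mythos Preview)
Your proposal is correct and follows essentially the same approach as the paper: invoke \cite{Bau04} Proposition~4.4 for the nontriviality of $\tilde{\Psi}(X_i,\s_i)$, Corollary~\ref{nonvanishing} for $\Psi(X_3,\xi,\s_3)$, and then apply the connected sum formula. You spell out a bit more than the paper does (the dimension count, the identification with $\eta$ and $\eta^2$, and the iteration of the formula), but the argument is the same.
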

\begin{proof}
\cite{Bau04} Proposition 4.4 tells that 
\[
\tilde{\Psi}(X_i, \s_i)\in \pi^{st}_1(S^0)\cong \Z/2
\]
is nontrivial
for $i=1, 2$.
By Corollary \ref{nonvanishing}, 
\[
\Psi(X_3, \xi, \s_3) \in \pi^{st}_0(S^0)
\]
is a generator.
Thus, the connected sum formula implies the conclusion.
\end{proof}
\begin{ex}
Elliptic surfaces $E(2n) \, (n=1, 2, \dots)$ equipped with the $\spinc$ structure determined by K\"ahler structures are examples of $(X_i, \s_i)\, (i=1, 2)$.
Any Stein fillings are examples of $(X_3, \xi, \s_3)$.
\end{ex}
On the other hand, for manifolds $X_1\#X_3$, $X_1\#X_2\#X_3$ in the above theorem, we can show that their Kronheimer--Mrowka's invariants are zero for {\bf all} $\spinc$ structures on them:
\begin{prop}\label{vanish}
Let $X_1$ be a closed oriented connected 4-manifold with $b_1(X_1)=0$ and $b^+(X_1)\geq 1$.
Let $(X_2, \xi)$ be a compact oriented connected 4-manifold with contact boundary.
Then, for any $\s \in \spinc(X_1\#X_2, \xi)$, $\mathfrak{m}(X_1\#X_2, \xi, \s)$ is zero. In particular, $X_1\#X_2$ never has a structure of a weak symplectic filling.
\end{prop}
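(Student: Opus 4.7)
The plan is to combine neck-stretching in the connected sum with the classical Witten-style vanishing of Seiberg--Witten invariants on closed 4-manifolds with $b^+\geq 1$.  Since $\mathfrak m(X_1\#X_2,\xi,\s)=0$ by definition whenever $d(\s)\neq 0$, I may assume $d(\s)=0$.  Using $b^+(X_1)\geq 1$, pick a nonzero self-dual harmonic 2-form $\eta$ on $X_1$ and cut it off by a function $\beta$ vanishing in a neighborhood of the removed ball to obtain a compactly supported 2-form $\tilde\eta$ living inside $X_1\setminus \mathring D^4\subset X_1\#X_2$.  Perturb the Kronheimer--Mrowka curvature equation by $t\tilde\eta$; since $\tilde\eta$ is compactly supported and coincides with the self-dual harmonic form $\eta$ on the locus where $\beta\equiv 1$, a standard parametric cobordism argument shows $\mathfrak m(X_1\#X_2,\xi,\s)$ is unchanged.

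Next I would insert a long cylindrical neck $[-L,L]\times S^3$ at the connect-sum $S^3$ and argue by contradiction.  If solutions existed for arbitrarily large $t$ and $L$, Kronheimer--Mrowka's compactness theorem (\cite{KM97}3(iii)) together with standard cylindrical-end compactness along the $S^3$-neck would produce subsequential limits: a solution of the perturbed Seiberg--Witten equation on $X_1\setminus \mathring D^4$ with cylindrical end asymptotic to the unique flat $\spinc$ connection on $S^3$, together with a solution on $X_2^+\setminus \mathring D^4$.  The Witten argument is then applied to the first: pair the curvature equation against $\eta$, use the Weitzenbock identity to derive an $L^4$ bound $\|\Phi\|_{L^4}^2\leq C(1+t\|\eta\|_{L^2})$, and use the closedness of $\eta$ together with the compact support of $\tilde\eta$ to evaluate $\int \tilde\eta\wedge F_A$ as a fixed topological constant plus error terms supported where $d\beta\neq 0$.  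One obtains an inequality of the form $t\|\eta\|_{L^2}^2\leq C'(1+t^{1/2})$ which, after rescaling $\eta$ so that $\|\eta\|_{L^2}^2$ dominates, fails for $t$ large, the desired contradiction.

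The main obstacle is executing the Weitzenbock/$L^4$ estimate on the noncompact manifold $X_1\setminus \mathring D^4$ with a cylindrical end rather than on the closed $X_1$: one must justify Stokes' theorem and verify the vanishing of boundary terms at infinity, and one must control the error terms supported on the annulus where $d\beta\neq 0$.  This is handled using the exponential decay of $(A-A_0,\Phi)$ on the cylindrical end, which is a consequence of the fact that the reducible on $S^3$ with its positive-scalar-curvature metric is a nondegenerate critical point of the Chern--Simons--Dirac functional; the decay ensures convergence of all integrals, legitimizes Stokes, and implies that the error terms coming from $d\beta\wedge\eta$ paired against $A-A_0$ become arbitrarily small as $L\to\infty$.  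Finally, the ``in particular'' statement is immediate: if $X_1\#X_2$ admitted a weak symplectic filling structure $\omega$, then Kronheimer--Mrowka's non-vanishing theorem would give $\mathfrak m(X_1\#X_2,\xi,\s_\omega)=\pm 1$ for the $\spinc$ structure determined by $\omega$, contradicting the vanishing just proved.
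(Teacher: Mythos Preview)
Your approach has a genuine gap in the Witten-style step. The Weitzenb\"ock identity, whether applied pointwise at a maximum of $|\Phi|^2$ or in integrated form, only yields $\|\Phi\|_{L^4}^2 \leq C(1+t)$ (equivalently $\|\Phi\|_\infty^2 \leq C(1+t)$), not an $O(t^{1/2})$ bound. Feeding this into the pairing
\[
\int F_A^+\wedge\eta \;=\; \int \rho^{-1}(\Phi\Phi^*)_0\cdot\eta \;+\; t\|\eta\|_{L^2}^2
\]
gives only $t\|\eta\|_{L^2}^2 \leq C'(1+t)$, which is no contradiction. Rescaling $\eta$ does not help, since the constant in the Weitzenb\"ock bound scales with $\|\eta\|_\infty$. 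In fact the conclusion you are aiming for---that the perturbed moduli space on the $X_1$-side is empty for all large $t$---is simply false in general: when $X_1$ is symplectic and $\eta=\omega^+$, Taubes shows that solutions in the canonical $\spinc$ structure persist for every large $t$. So no amount of care with boundary terms or exponential decay on the cylindrical end will rescue this line of argument.

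The paper's proof is short and entirely different. One writes $\s=\s_1\#\s_2$ (possible since $\spinc(X_1\#X_2,\xi)$ is a torsor over $H^2(X_1)\oplus H^2(X_2,\partial X_2)$) and uses the dimension formula $d(\s)=d(\s_1)+d(\s_2)+1$; when $d(\s)=0$ one of $d(\s_1),d(\s_2)$ is strictly negative. A generic small perturbation (using $b^+(X_1)\geq 1$ on the closed side, or ordinary Sard--Smale transversality on the contact side) then makes the corresponding moduli space empty, so the corresponding factor $\tilde\Psi(X_1,\s_1)$ or $\Psi(X_2,\xi,\s_2)$ vanishes. The connected sum formula (Theorem~\ref{connected sum formula}) gives $\Psi(X_1\#X_2,\xi,\s)=\tilde\Psi(X_1,\s_1)\wedge\Psi(X_2,\xi,\s_2)=0$, and Theorem~\ref{recover} converts this to $\mathfrak m=0$. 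The crucial ingredient you are missing is this dimension count and product structure; without it there is no reason whatsoever for solutions on the $X_1$-side to disappear. Your final paragraph, deriving the ``in particular'' clause from the main vanishing statement together with Kronheimer--Mrowka's non-vanishing theorem, is correct.
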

\begin{proof}
Since $\spinc(X_1\#X_2, \xi)$ is a principal homogeneous space for \\$H^2(X_1\#X_2, \partial (X_1\#X_2))=H^2(X_1)\oplus H^2(X_2, \partial X_2)$, 
any $\s \in \spinc(X_1\#X_2, \xi)$ can be realized as a connected sum $\s=\s_1\#\s_2$, where $\s_1$ is a $\spinc$ structure on $X_1$ and $\s_2 \in \spinc(X_2, \xi)$.
Then, we have
\[
d(\s)=d(\s_1)+d(\s_2)+1.
\]
If $d(\s)\neq 0$, $\mathfrak{m}(X_1\#X_2, \xi, \s)=0$ by definition.
If $d(\s)=0$, either $d(\s_1)<0$ or $d(\s_2)<0$ holds.
So, by perturbing the Seiberg--Witten equation, we can make one of the moduli spaces empty (here, we use the assumption $b^+(X_1)\geq 1$) and thus either $\tilde{\Psi}(X_1, \s_1)$ or $\Psi(X_2, \xi, \s_2)$ is zero.
This implies that $\mathfrak{m}(X_1\#X_2, \xi, \s)=0$.
\end{proof}
\subsection{An application}
As an application of the connected sum formula, we can show the following results on the existence of a connected sum decomposition $X=X_1\#X_2$ for a 4-manifold with contact boundary $X$.
This result can be seen as a contact-boundary version of \cite{Bau04} Corollary 1.2, Corollary1.3 for closed manifolds.
\begin{thm}
Let $(X, \xi)$ be a compact oriented connected 4-manifold with contact boundary satisfying $H^1(X, \partial X; \R)=0$.
Suppose there exists $\s \in \spinc(X, \xi)$ such that $\Psi(X, \xi, \s)\neq 0$.
Then, if $X$ can be decomposed as a connected sum $X_1\#X_2$ for a closed 4-manifold $X_1$ and a 4-manifold with contact boundary $X_2$, the following holds.
\begin{enumerate}
\item
Suppose $d(\s)=0$. Then $X_1$ is negative definite.
\item
Suppose $d(\s)=1$. Then either of the following holds.
\begin{enumerate}
\item
$b^+(X_1)=0$ 
\item
$b^+(X_1)\equiv 3\, (\mathrm{mod} 4)$ and there exist a $\spinc$ structure $\s_1$ on $X_1$ and $\s_2 \in \spinc(X_2, \xi)$ such that $\mathfrak{m}(X_1, \s_1)$ and $\mathfrak{m}(X_2, \xi, \s_2)$ are both odd.
\end{enumerate}
\item
Suppose $d(\s)=2$. Then, $b^+(X_1)\not\equiv 1 (\mathrm{mod} 4)$ holds.
Furthermore, if $b^+(X_1)\neq 0$, either of the following holds.
\begin{enumerate}
\item
$b^+(X_1)\equiv 3 \,(\mathrm{mod} 4)$ and there exists a $\spinc$ structure $\s_1$ on $X_1$ such that $\mathfrak{m}(X_1, \s_1)$ is odd.
\item
$b^+(X_1)$ is even and there exists $\s_2 \in \spinc(X_2, \xi, \s_2)$ such that $\mathfrak{m}(X_2, \xi, \s_2)$ is odd.
\end{enumerate}
\end{enumerate}
\end{thm}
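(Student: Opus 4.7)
The plan is to combine the connected sum formula (Theorem~\ref{connected sum formula}) with Bauer's divisibility/non-triviality results for the stable cohomotopy Seiberg--Witten invariant of closed $4$-manifolds (\cite{Bau04} Proposition~4.4 and its higher-degree analogue), together with the perturbation vanishing argument used in the proof of Proposition~\ref{vanish}. Since $\spinc(X_1\#X_2,\xi)$ is a principal homogeneous space for $H^2(X_1)\oplus H^2(X_2,\partial X_2)$, I would first write $\s=\s_1\#\s_2$ with $\s_1$ a $\spinc$ structure on $X_1$ and $\s_2\in\spinc(X_2,\xi)$, note that $d(\s)=d(\s_1)+d(\s_2)+1$, and apply Theorem~\ref{connected sum formula} to obtain
\[
\Psi(X,\xi,\s)=\tilde{\Psi}(X_1,\s_1)\wedge\Psi(X_2,\xi,\s_2)\in\pi^{st}_{d(\s)}(S^0).
\]
The hypothesis $\Psi(X,\xi,\s)\neq 0$ then forces both smash factors to be non-zero.

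Two preliminary constraints follow immediately. Since $\pi^{st}_k(S^0)=0$ for $k<0$, we must have $d(\s_2)\geq 0$ and $d(\s_1)\geq -1$. Moreover, exactly as in the proof of Proposition~\ref{vanish}, if $b^+(X_1)>0$ and $d(\s_1)<0$ then a generic perturbation of the Seiberg--Witten equation on $X_1$ empties the moduli space and forces $\tilde{\Psi}(X_1,\s_1)=0$; hence either $b^+(X_1)=0$ or $d(\s_1)\geq 0$. Finally, by Theorem~\ref{recover}, $\Psi(X_2,\xi,\s_2)\in\pi^{st}_0(S^0)\cong\Z$ is identified with $\mathfrak{m}(X_2,\xi,\s_2)$ when $d(\s_2)=0$, and similarly $\tilde{\Psi}(X_1,\s_1)\in\pi^{st}_0(S^0)\cong\Z$ is identified with the ordinary Seiberg--Witten invariant when $d(\s_1)=-1$.

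I would then enumerate the admissible pairs $(d(\s_1),d(\s_2))$ case by case using $d(\s_1)+d(\s_2)+1=d(\s)\in\{0,1,2\}$. For $d(\s)=0$ the only option is $(-1,0)$, giving $b^+(X_1)=0$. For $d(\s)=1$, the pair $(-1,1)$ forces $b^+(X_1)=0$, while $(0,0)$ puts $\tilde{\Psi}(X_1,\s_1)\in\pi^{st}_1(S^0)\cong\Z/2$ and $\Psi(X_2,\xi,\s_2)\in\pi^{st}_0(S^0)\cong\Z$; non-vanishing of the smash product in $\Z/2$ then forces $\mathfrak{m}(X_2,\xi,\s_2)$ odd and $\tilde{\Psi}(X_1,\s_1)\neq 0$ in $\pi^{st}_1$, which by \cite{Bau04} Proposition~4.4 translates into $b^+(X_1)\equiv 3\pmod 4$ and $\mathfrak{m}(X_1,\s_1)$ odd. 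For $d(\s)=2$ the admissible pairs are $(-1,2)$, $(0,1)$, $(1,0)$: the first yields $b^+(X_1)=0$; the second, using that $\eta\wedge\eta=\eta^2\neq 0$ in $\pi^{st}_2(S^0)$, yields $b^+(X_1)\equiv 3\pmod 4$ with $\mathfrak{m}(X_1,\s_1)$ odd as before; the third yields $b^+(X_1)$ even together with $\mathfrak{m}(X_2,\xi,\s_2)$ odd. In particular $b^+(X_1)\not\equiv 1\pmod 4$ in every sub-case.

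The main obstacle will be the $(1,0)$ sub-case of $d(\s)=2$: here one needs the appropriate Bauer-type statement for $\tilde{\Psi}(X_1,\s_1)\in\pi^{st}_2(S^0)\cong\Z/2$ with $d(\s_1)=1$, namely that non-vanishing of this class forces $b^+(X_1)$ to be even. This should follow from the mod-$2$ $\eta$-divisibility analysis underlying \cite{Bau04} Proposition~4.4 applied one virtual dimension higher, but some care is required in extracting the correct congruence mod $4$ and in book-keeping which of $X_1$, $X_2$ carries the odd Seiberg--Witten invariant in each sub-case.
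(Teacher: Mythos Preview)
Your overall strategy---split $\s=\s_1\#\s_2$, apply the connected sum formula, and enumerate the admissible pairs $(d(\s_1),d(\s_2))$ using Bauer's results on $\tilde{\Psi}$---is exactly the paper's approach, and your case analysis for $d(\s)\in\{0,1\}$ matches the paper's (indeed your pair $(0,0)$ in the $d(\s)=1$ case is correct; the paper's ``$(1,0)$'' there is a typo). The perturbation argument you invoke for the $(-1,\ast)$ cases is the same one used in Proposition~\ref{vanish}, and the paper also packages the $d(\s_1)=0$ case via the second bullet of Bauer's Proposition~4.4, just as you do.

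The one place you diverge is precisely the sub-case you flag as the ``main obstacle'': $(d(\s_1),d(\s_2))=(1,0)$ in part~(3). You propose to extract $b^+(X_1)$ even from a degree-two analogue of Bauer's $\eta$-divisibility analysis, and you worry this requires care. In fact no homotopy-theoretic input is needed here at all. The paper simply uses the elementary parity congruence
\[
d(\s_1)\;\equiv\;1+b^+(X_1)\pmod 2,
\]
valid for any $\spinc$ structure on a closed oriented $4$-manifold with $b_1=0$ (it follows from $c_1(\s_1)^2\equiv\sigma(X_1)\pmod 8$ and $\chi+\sigma=2+2b^+$). With $d(\s_1)=1$ this gives $b^+(X_1)$ even immediately, before one even looks at $\tilde{\Psi}(X_1,\s_1)$. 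So the ``obstacle'' dissolves; you do not need any statement about $\tilde{\Psi}$ in $\pi^{st}_2$. (Incidentally, your appeal to $\eta^2\neq 0$ in the $(0,1)$ sub-case is also unnecessary: non-vanishing of the smash product already forces $\tilde{\Psi}(X_1,\s_1)\neq 0$ in $\pi^{st}_1$, and then Bauer's Proposition~4.4 applies directly.)
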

\begin{proof}
(1) follows from Proposition \ref{vanish}  and Theorem \ref{recover}. 
In order to show (2) and (3), we use the following results of Bauer.
\begin{prop}\label{Bauer}(\cite{Bau04})\,
Let $X$ be a closed oriented connected 4-manifold with $b_1(X)=0$ and $\s$ be a $\spinc$ stucture on it. Then the following holds.
\begin{enumerate}
\item[(i)]
If $d(\s)=-1$ and $\tilde{\Psi}(X, \s)\in \pi^{st}_0(S^0)\cong \Z$ is odd, then $b^+(X)=0$.
\item[(ii)]
If $d(\s)=0$ and $\tilde{\Psi}(X, \s) \neq 0 \in \pi^{st}_1(S^0) \cong \Z/2$, then $b^+(X)\equiv 3 \, (\mathrm{mod} 4)$ and $\mathfrak{m}(X, \s)$ is odd.
\end{enumerate}
\end{prop}
\begin{proof}
(i)
This argument is quoted from Corollaly 1.3  of \cite{Bau04}.
In the case $d(\s)=-1$, the Bauer-Furuta invariant is represented by an $S^1$ equivariant map
\[
f: (\R^n\oplus \C^{m+\frac{b^+(X)}{2}})^+\to (\R^{n+b^+(X)}\oplus \C^{m})^+.
\]
If $b^+(X)\neq 0$, then the non-equivarinat degree of such a map must be zero by equivariant homotopy theory.
\par
(ii)
 follows Proposition 4.4  of \cite{Bau04}.
Notice that although the assumption written there is that the $\spinc$ structure    $\s$ comes from an almost complex structure,  this is equivalent to $d(\s)=0$, as is well known.
One argument to show this is to use the fact that $d(\s)$ is equal to the Euler number of the positive spinor bundle  $S^+$ of $\s$ (See Lemma 28.2.4 of \cite{KM07}, for example.) and the correspondence between nowhere vanishing sections of $S^+$ and almost complex structures (See Lemma 2.1 of \cite{KM97}.).
\end{proof}
In the both cases (2)(3), there exist a $\spinc$ structure $\s_1$ on $X_1$ and $\s_2 \in \spinc(X_2, \xi)$ such that $\s=\s_1\#\s_2$.
The connected sum formula implies
\[
\Psi(X, \xi, \s)=\tilde{\Psi}(X_1, \s_1)\wedge \Psi(X_2, \xi, \s_2) \in \pi^{st}_{d(\s)}(S^0)
\]
and $d(\s)=d(\s_1)+d(\s_2)+1$.
Thus, $\tilde{\Psi}(X_1, \s_1)$ and $\Psi(X_2, \xi, \s_2)$ are both nontrivial and in particular the degree of the stable homotopy groups which they belong, $d(\s_1)+1$, $d(\s_2)$,  must be non-negative.
\par
Now we will show (2). In this case, $(d(\s_1), d(\s_2))= (-1, 1), (0,0)$.
\begin{itemize}
\item
In the case $(d(\s_1), d(\s_2))= (-1, 1)$, Proposition \ref{Bauer} implies $b^+(X_1)=0$.
\item
In the case $(d(\s_1), d(\s_2))= (0,0)$, Proposition \ref{Bauer} implies 
$b^+(X_1)\equiv 3 \, (\mathrm{mod} 4)$ and $\mathfrak{m}(X_1, \s_1)$ is odd and the fact that $0 \neq \tilde{\Psi}(X_1, \s_1)\wedge \Psi(X_2, \xi, \s_2) \in \pi^{st}_{1}(S^0)\cong \Z/2$ and $d(\s_2)=0$ implies $\mathfrak{m}(X_2, \xi, \s_2)$ is odd.

\end{itemize}
This proves (2).
\par
Finally, we turn to (3). In this case, $(d(\s_1), d(\s_2))=(-1, 2),(0, 1), (1, 0)$.
\begin{itemize}
\item
In the case $(d(\s_1), d(\s_2))=(-1, 2)$, Proposition \ref{Bauer} implies $b^+(X_1)=0$.
\item
In the case $(d(\s_1), d(\s_2))=(0, 1)$, Proposition \ref{Bauer} implies 
$b^+(X_1)\equiv 3 \, (\mathrm{mod} 4)$ and $\mathfrak{m}(X_1, \s_1)$ is odd.
\item
In the case $(d(\s_1), d(\s_2))=(1, 0)$, $b^+(X_1)$ is even since $d(\s_1)\equiv 1+b^+(X_1)\, (\mathrm{mod} 2)$.
The fact that $\mathfrak{m}(X_2, \xi, \s_2)$ is odd follows from the same reason as in the latter case of (2).
\end{itemize}
This proves (3).
\end{proof}
\bibliographystyle{jplain}
\bibliography{mron}
\end{document}